\numberwithin{equation}{section}
\theoremstyle{plain}
\newtheorem{theorem}{Theorem}[section]				
\newtheorem{proposition}[theorem]{Proposition}		
\newtheorem{corollary}[theorem]{Corollary}
\newtheorem{lemma}[theorem]{Lemma}
\theoremstyle{definition}
\newtheorem{definition}[theorem]{Definition}
\newtheorem{remark}[theorem]{Remark}
\newcommand{\CBbb}{\mathbb C}
\newcommand{\VBbb}{\mathbb V}
\newcommand{\ZBbb}{\mathbb Z}
\newcommand{\Bcal}{\mathcal B}
\newcommand{\Ical}{\mathcal I}
\newcommand{\Lcal}{\mathcal L}
\newcommand{\Mcal}{\mathcal M}
\newcommand{\Ocal}{\mathcal O}
\newcommand{\SO}{\mathsf{SO}}
\newcommand{\Sp}{\mathsf{Sp}}
\newcommand{\PSp}{\mathsf{PSp}}
\newcommand{\Spin}{\mathsf{Spin}}
\newcommand{\PSO}{\mathsf{PSO}}
\newcommand{\slfrak}{\mathfrak{sl}}
\newcommand{\SC}{\mathsf{SC}}
\DeclareMathOperator{\imag}{Im}
\DeclareMathOperator{\tr}{tr}
\DeclareMathOperator{\Div}{div}
\DeclareMathOperator{\Nm}{Q}
\DeclareMathOperator{\nm}{Nm}
\newcommand{\lra}{\longrightarrow}
\newcommand{\Pic}{{\rm Pic}}
\newcommand*\bigcdot{\mathpalette\bigcdot@{.5}}
\newcommand*\bigcdot@[2]{\mathbin{\vcenter{\hbox{\scalebox{#2}{$\m@th#1\bullet$}}}}}
\newcommand{\thickcolon}{\mathpalette\thick@colon\relax}
\newcommand{\thick@colon}[2]{%
  \mspace{1mu}%
  \vbox{%
    \hbox{$\m@th#1\bigcdot$}
    \nointerlineskip
    \kern.15ex
    \hbox{$\m@th#1\bigcdot$}
    \kern-.55ex
  }%
  \mspace{1mu}%
}
  \def\MR#1{}
\begin{document}
\title[Spectral data for Spin Higgs Bundles]{Spectral data for Spin Higgs Bundles}
\author{Swarnava Mukhopadhyay}

\address{Tata Institute of Fundamental Research,
Homi Bhabha Road, 
Mumbai 400005, INDIA}
%
\email{swarnava@math.tifr.res.in}
\author{Richard Wentworth}
\address{Department of Mathematics, University of Maryland, College Park, MD 20742, USA}
\email{raw@umd.edu}
 \thanks{S.M. thanks the Max Planck Institute for Mathematics in Bonn, where a portion of this work was completed. R.W.'s research was supported by grants from the National Science Foundation. The authors also acknowledge support from NSF grants DMS-1107452, -1107263, -1107367 ``RNMS: GEometric structures And Representation varieties'' (the GEAR Network). 
  } 
\subjclass[2010]{Primary  14H60, Secondary 17B67,  32G34, 81T40}
\begin{abstract} 
In this paper we determine the spectral data parametrizing Higgs bundles in a generic fiber of the Hitchin map for the case where the structure group is the special Clifford group with fixed Clifford norm. These are spin and ``twisted'' spin Higgs bundles. The method used relates variations in spectral data with respect to the Hecke transformations for orthogonal bundles introduced by Abe.
The explicit description also recovers a result from the geometric Langlands program which states that  the fibers of the Hitchin map are the dual abelian varieties to the corresponding fibers of the moduli spaces of projective orthogonal Higgs bundles (in the even  case) and  projective symplectic Higgs bundles (in the odd  case).
\end{abstract}

\maketitle

\allowdisplaybreaks

\section{Results}
Let $X$ be a smooth projective algebraic curve of genus $g\geq 2$ and $p\in X$.
Let
$\Mcal^{\pm}_{\Spin(N)}(X)$ denote the coarse moduli spaces of semistable Higgs bundles on $X$ with the special Clifford group $\SC(N)$ as  structure group  and fixed Clifford norm of even $(+)$ or odd $(-)$ degree, respectively. 
For concreteness and without loss of generality, we require the Clifford norms to be $\Ocal_X$ and $\Ocal_X(p)$, respectively.
Then $\Mcal^{+}_{\Spin(N)}$ is exactly the moduli space of $\Spin(N)$ Higgs bundles, whereas $\Mcal^-_{\Spin(N)}$ is a moduli space of \emph{twisted} $\Spin(N)$ Higgs bundles (see Section \ref{sec:clifford}). 

The Hitchin fibration takes the form:
$$
h^\pm_{\Spin(N)}: \Mcal^{\pm}_{\Spin(N)}\lra \Bcal(N):=
\begin{cases}
\bigoplus_{i=1}^{m} H^0(X, K_X^{2i}) & N=2m+1 \\
\bigoplus_{i=1}^{m-1} H^0(X, K_X^{2i})\oplus H^0(X, K_X^m) & N=2m\ ,
\end{cases}
$$
The maps $h^\pm_{\Spin(N)}$ realize these moduli spaces as  algebraically complete integrable systems
whose generic fibers are torsors over abelian varieties.
 The main goal of this note is to describe these abelian varieties explicitly in terms of \emph{spectral data}.

 The problem is clearly related to the case of Higgs bundles for  orthogonal groups. Here there is a complete description (see \cite{Hitchin:87b, Hitchin:07}). Hitchin describes the spectral data  in terms of line bundles in the \emph{Prym variety} associated to the spectral curve  defined by the point in $\Bcal(N)$. The construction, which we briefly review in Sections \ref{sec:spectraloddorthogonal} and \ref{sec:spectralevenorthogonal} below, involves fixing a spin structure on $X$. In the end, this ancillary choice is irrelevant, but it gives a hint that hidden in the argument is actually a lift to $\Spin(N)$ (or $\SC(N)$). 
 We shall show that these data  indeed provide the extra structure of a Clifford  bundle. 
 
  To be more precise, let  $\vec b\in \Bcal(N)$ be a generic point.
  By the \emph{spectral curve} $\pi : S\to X$  we mean (somewhat unconventionally)
  the normalization of the branched cover of $X$ defined by $\vec b$ (see
  Sections \ref{sec:spectraloddorthogonal} and \ref{sec:spectralevenorthogonal}).
  Let $\overline S=S/\sigma$, where $\sigma$ is the natural involution, and let $K(S,\overline S)$ denote the kernel of the norm map $\nm_{S/\overline S}:J(S)\rightarrow J(\overline{S})$.
  Then $K(S,\overline S)$ is just the Prym variety $P(S,\overline S)$ of the cover $p: S\to \overline S$ for $N$ odd, whereas for $N$ even, $P(S,\overline S)\subset K(S,\overline S)$ is the connected component of the trivial bundle. 
 In both cases,  $J_2(\overline S)$ acts additively on $K(S,\overline S)$ by pulling back via $p^\ast$,  and it acts  on $J_2(X)$ via the norm map of the covering $\overline S\to X$.
 The main result may then be stated as follows.

\begin{theorem}[{\sc Spectral Data}] \label{thm:main}
For generic points $\vec b\in \Bcal(N)$, the  fiber  $(h^\pm_{\Spin(N)})^{-1}(\vec b)$  is a torsor over the abelian variety:
\begin{equation} \label{eqn:A-spin}
A_{\Spin(N)}(X,\vec b):=
K(S,\overline S)\times_{J_2(\overline{S})} J_2(X) \  .
\end{equation}
In terms of Prym varieties,
$$
A_{\Spin(N)}(X,\vec b)=
\begin{cases}
P(S,\overline S)\times_{J_2(\overline{S})} J_2(X)\ , & N \text{ odd };\\
P(S,\overline S)\times_{H_1/H_0} J_2(X)\ , & N \text{ even },
\end{cases}
$$
where in the even case $H_0\simeq \ZBbb/2$ is the subgroup of $J_2(\overline S)$ generated by the line bundle defining the \'etale cover $S\to \overline S$, and $H_1$ is the annihilator of $H_0$ in $J_2(\overline S)$  with respect to the Weil pairing.
\end{theorem}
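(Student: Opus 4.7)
My plan is to leverage the projection $\SC(N)\to \SO(N)$ coming from the defining exact sequence $1\to \GBbb_m\to \SC(N)\to \SO(N)\to 1$. This projection induces a forgetful morphism from $\Mcal^\pm_{\Spin(N)}$ to an appropriate moduli space of $\SO(N)$ Higgs bundles that intertwines the two Hitchin maps. By the orthogonal spectral description reviewed in Sections \ref{sec:spectraloddorthogonal} and \ref{sec:spectralevenorthogonal}, the orthogonal fiber at $\vec b$ is a torsor over $K(S,\overline S)$. I therefore expect to realize the Spin fiber as a $K(S,\overline S)$-torsor refined by a $J_2(X)$ worth of Spin lifts, subject to a gluing condition producing the fiber product in \eqref{eqn:A-spin}.

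Concretely, I would construct two commuting actions on $(h^\pm_{\Spin(N)})^{-1}(\vec b)$. The first is the natural action of $K(S,\overline S)$ on spectral line bundles, realized via the Hecke transformations of Abe for orthogonal bundles (as indicated in the abstract); the second is the action of $J_2(X)$ tensoring a Spin lift by a $2$-torsion line bundle on $X$, which does not alter the underlying orthogonal Higgs bundle. Neither action disturbs the Hitchin image. The key computation, which is the heart of the argument, is to identify the redundancy: for $\eta\in J_2(\overline S)$ the class $p^*\eta$ lies in $K(S,\overline S)$ because $\nm_{S/\overline S}(p^*\eta)=\eta^{\otimes 2}=\Ocal_{\overline S}$, and I would show that the Hecke transformation by $p^*\eta$ preserves the underlying $\SO(N)$-Higgs bundle while shifting the Spin lift by $\nm_{\overline S/X}(\eta)\in J_2(X)$. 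Consequently, the combined $K(S,\overline S)\times J_2(X)$-action factors through the fiber product $K(S,\overline S)\times_{J_2(\overline S)} J_2(X)=A_{\Spin(N)}(X,\vec b)$.

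To finish, I would verify that this induced action is simply transitive. Transitivity reduces to: (i) transitivity of $K(S,\overline S)$ on the orthogonal fiber (Hitchin); and (ii) transitivity of $J_2(X)=H^1(X,\mu_2)$ on the set of Spin lifts with fixed Clifford norm of any given $\SO(N)$-Higgs bundle in the fiber. Freeness follows by reversing this: an element fixing a Spin Higgs bundle must first fix its orthogonal projection, forcing its $K$-component to lie in $p^*J_2(\overline S)$, and the corresponding constraint on the $J_2(X)$-component is precisely what the fiber product collapses to the identity. For the Prym reformulation, I would observe that if $N$ is odd then $p:S\to\overline S$ is ramified and $K(S,\overline S)=P(S,\overline S)$; if $N$ is even then $p$ is \'etale, $K(S,\overline S)$ has two connected components with identity component $P(S,\overline S)$, and the class of the line bundle defining the \'etale cover generates the subgroup $H_0\simeq\ZBbb/2\subset J_2(\overline S)$, whose Weil-pairing annihilator $H_1$ is then forced to appear after descent of the fiber product.

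The principal difficulty is the key computation in the second paragraph, tracking how a Hecke transformation by $p^*\eta$ with $\eta\in J_2(\overline S)$ acts on the Spin/Clifford lift, and in particular establishing that this action coincides with tensoring by $\nm_{\overline S/X}(\eta)$. This requires a precise analysis of Abe's Hecke transformations in relation to the Clifford structure. It is also here that the evenness or oddness of $N$ enters through the ramification behavior of $p$ and, in the even case, through the extra $2$-torsion class defining the \'etale cover; all the subtle $H_0,H_1$ structure in the Prym reformulation is forced by this calculation.
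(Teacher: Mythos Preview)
Your proposal is essentially the same strategy as the paper's, and you have correctly located the heart of the argument: the change in the Clifford lift under the spectral modification $L\mapsto L\otimes p^*\eta$ for $\eta\in J_2(\overline S)$ is precisely tensoring by $\nm_{\overline S/X}(\eta)\in J_2(X)$. A few remarks on where your outline diverges from the paper and where care is needed.

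First, a small imprecision: the orthogonal fiber over $\vec b$ is not literally a torsor over $K(S,\overline S)$. In the odd case it is $P(S,\overline S)/J_2(\overline S)$ (Theorem~\ref{thm:odd-orthogonal}); in the even case it is the connected component $P(S,\overline S)\subset K(S,\overline S)$ (Theorem~\ref{thm:even-orthogonal}). This matters for setting up your ``action'' correctly.

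Second, your framing as a $K(S,\overline S)$-action on the Spin fiber is delicate: a single Hecke transformation of a Clifford bundle shifts the Clifford norm (Section~\ref{sec:hecke-spin}), so acting by an arbitrary element of $K(S,\overline S)$ does not a priori preserve the locus $\Nm=\Ocal_X$, and the lift of a composite of Hecke moves depends on the choice of divisor representing the element. The paper sidesteps this by fixing a base point $(V_c,\Phi_c)$ with a canonical Spin lift (Lemma~\ref{lem:canonical-spin}) and, for each $M\in K(S,\overline S)$, producing via iterated Hecke transformations a specific Clifford lift $P_M$ of $V_{L_c\otimes M^2}$ with $\Nm(P_M)=\nm_{S/X}(M)=\Ocal_X$ (Corollary~\ref{cor:spin-hecke}). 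This base-point parametrization $M\mapsto P_M$ is what replaces your ``action'', and its surjectivity onto the Spin fiber uses the surjectivity of $\nm_{\overline S/X}:J_2(\overline S)\to J_2(X)$.

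Third, and most importantly, the paper's proof of the key computation is by a continuity argument you do not anticipate: one chooses a path $N(t)$ in $J(\overline S)$ from $\Ocal_{\overline S}$ to $N\in J_2(\overline S)$, so that $P_{M\otimes p^*N(t)}\otimes(\nm_{\overline S/X}N(t))^{-1}$ is a continuous family of Spin lifts of the \emph{fixed} orthogonal bundle $V_L$; since the set of such lifts is finite, the family is constant, and evaluating at $t=0,1$ gives $P_{M\otimes p^*N}\simeq P_M\otimes\nm_{\overline S/X}(N)$. This trick is what makes the ``precise analysis'' you flag in your last paragraph tractable, and is worth incorporating into your write-up.
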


The fact that the right hand side of \eqref{eqn:A-spin} is connected is not quite obvious (see Lemma \ref{lem:connected}). 
We have the following consequence.
\begin{corollary} \label{cor:connected}
The fibers of the Hitchin map for $\Mcal^-_{\Spin(N)}$ are connected.
\end{corollary}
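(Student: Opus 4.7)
The plan is to leverage Theorem \ref{thm:main} and Lemma \ref{lem:connected}, which together establish connectedness of the generic fibers of $h^-_{\Spin(N)}$, and then to promote this to all fibers via a standard Stein factorization argument applied to the (proper) Hitchin map.

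By Theorem \ref{thm:main}, for generic $\vec b \in \Bcal(N)$ the fiber $(h^-_{\Spin(N)})^{-1}(\vec b)$ is a torsor over the algebraic group $A_{\Spin(N)}(X,\vec b)$, and Lemma \ref{lem:connected} ensures that this algebraic group is connected (i.e.\ that the fiber product in \eqref{eqn:A-spin} is a genuine abelian variety rather than an extension of a nontrivial finite group by one). Hence the generic fibers of $h^-_{\Spin(N)}$ are connected. Since $h^-_{\Spin(N)}$ is proper and $\Bcal(N)$ is an affine space (in particular smooth and normal), I would take the Stein factorization $h^-_{\Spin(N)} = g\circ \tilde h$ with $\tilde h\colon \Mcal^-_{\Spin(N)}\to Y'$ having geometrically connected fibers and $g\colon Y'\to \Bcal(N)$ finite. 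Generic connectedness of the fibers of $h^-_{\Spin(N)}$ forces $g$ to be one-to-one over a dense open subset; provided $Y'$ is irreducible, this makes $g$ birational, and since $\Bcal(N)$ is normal, a finite birational morphism is an isomorphism by Zariski's main theorem. Hence every fiber of $h^-_{\Spin(N)}$ is connected.

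The main obstacle is the irreducibility of $\Mcal^-_{\Spin(N)}$, which is what ensures that $Y'$ is irreducible and is also precisely what distinguishes the $-$ case from the $+$ case. Fixing the Clifford norm to be $\Ocal_X(p)$ of odd degree pins down a single topological type of $\SC(N)$-bundle, analogous to fixing an odd-degree determinant for $\SL$-Higgs bundles, and the resulting moduli space is irreducible; for the trivial Clifford norm $\Ocal_X$, by contrast, $\Mcal^+_{\Spin(N)}$ decomposes into several connected components indexed by Stiefel--Whitney-type invariants, which is why no analogous statement is claimed for $h^+_{\Spin(N)}$. If the irreducibility of $\Mcal^-_{\Spin(N)}$ is not cited directly from the literature on $\SC(N)$-Higgs moduli, I would establish it via a Hitchin-section / deformation argument, or, failing that, bypass Stein factorization altogether by using the explicit spectral data construction to verify fiber-by-fiber that no additional components appear over special points of $\Bcal(N)$.
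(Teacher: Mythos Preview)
The paper's proof is exactly your first paragraph and nothing more: Theorem \ref{thm:main} identifies the generic fiber as a torsor over $K(S,\overline S)\times_{J_2(\overline S)} J_2(X)$, and Lemma \ref{lem:connected} shows this is an abelian variety, hence connected. The corollary is implicitly about generic fibers (the same ones covered by Theorem \ref{thm:main}), so the Stein factorization step is not needed and not supplied.

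Your attempt to promote this to \emph{all} fibers is extra, and the justification you give for singling out the $-$ case is backwards. As the paper states immediately after the corollary, connectedness of the fibers for $\Mcal^{+}_{\Spin(N)}$ was already known from general results of Donagi--Pantev and Faltings; $\Spin(N)$ is simply connected, and $\Mcal^{+}_{\Spin(N)}=\Mcal_{\Spin(N)}$ does \emph{not} decompose into multiple Stiefel--Whitney components. The novelty is precisely the twisted space $\Mcal^{-}_{\Spin(N)}$, which is not a $G$-Higgs moduli space for any reductive $G$ and so is not covered by those general theorems. Thus your reasoning that ``no analogous statement is claimed for $h^+_{\Spin(N)}$'' because of disconnectedness is incorrect, and with it the contrast you draw to argue irreducibility of $\Mcal^-_{\Spin(N)}$ collapses. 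If you really wanted the statement for all fibers, you would need an independent argument for the irreducibility of $\Mcal^-_{\Spin(N)}$ (and normality, or at least a suitable variant, to run Zariski's main theorem cleanly), none of which the paper provides or requires.
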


In the case of $\Mcal^{+}_{\Spin(N)}$, i.e.\ $\Spin(N)$-Higgs bundles, 
the connectedness of the fibers follows from a general result of several authors (cf.\ \cite{DonagiPantev:12,Faltings:93}), whereas the fact that the fiber structure is the same in the twisted case $\Mcal^{-}_{\Spin(N)}$ (and hence also connected)  is a consequence of Theorem \ref{thm:main}.
An application of this fact is the following:
 Hitchin's construction  \cite{Hitchin:90} of a projectively flat connection on the space of generalized $\Spin(N)$ theta functions works as well in the twisted case. 
This connection  was used by the authors in \cite{MukhopadhyayWentworth:16} in their study of  strange duality for odd orthogonal bundles. 

 For a general complex reductive Lie group $G$, work of Donagi-Pantev \cite{DonagiPantev:12}, Hitchin \cite{Hitchin:07}, and Hausel-Thaddeus \cite{HauselThaddeus:03} show that the Hitchin system associated to $G$ is dual to the Hitchin system associated to ${}^LG$, the  Langlands dual group to $G$. Another consequence of Theorem \ref{thm:main} is an explicit duality for spin bundles.
Let $A_{\PSO(2m)}(X,\vec b)$ and $A_{\PSp(2m)}(X,\vec b) $ denote the fibers over $\vec b\in \Bcal(N)$ of the Hitchin map for the moduli spaces of projective special orthogonal and projective symplectic Higgs bundles, where $N=2m$ or $N=2m+1$, respectively. Then we have the following theorem (see Section \ref{sec:duality}):
\begin{theorem}[{\sc Langlands Duality}] \label{thm:duality}
For generic points $\vec b$, 
we have the following dualities of abelian varieties:
\begin{enumerate}
\item $\left(A_{\Spin(2m)}(X,\vec b)\right)^\vee \simeq A_{\PSO(2m)}(X,\vec b) $;
\item $\left(A_{\Spin(2m+1)}(X,\vec b)\right)^\vee \simeq A_{\PSp(2m)}(X,\vec b) $.
\end{enumerate}
\end{theorem}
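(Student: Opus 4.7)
The plan is to establish explicit spectral descriptions for $A_{\PSO(2m)}(X,\vec b)$ and $A_{\PSp(2m)}(X,\vec b)$ parallel to Theorem~\ref{thm:main}, and then deduce the stated dualities by applying Cartier duality of abelian varieties to the exact sequence defining $A_{\Spin(N)}(X,\vec b)$. The framework is that of Hausel--Thaddeus~\cite{HauselThaddeus:03} and Donagi--Pantev~\cite{DonagiPantev:12}: the Hitchin fiber for a simply connected group is realized as the quotient of a natural abelian variety by a finite $2$-group, whereas the Hitchin fiber for the Langlands-dual (adjoint) group is realized as a subvariety cut out by the Cartier dual of that $2$-group. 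The Weil pairings on $J_2(X)$ and $J_2(\overline S)$ canonically identify these finite groups with their own Cartier duals.

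First, Theorem~\ref{thm:main} presents $A_{\Spin(N)}(X,\vec b)$ as the cokernel of the short exact sequence
$$
0 \longrightarrow G \longrightarrow K(S,\overline S)\times J_2(X) \longrightarrow A_{\Spin(N)}(X,\vec b) \longrightarrow 0,
$$
where $G=J_2(\overline S)$ in the odd case and $G=H_1/H_0$ in the even case, embedded diagonally via $\xi\mapsto(p^{\ast}\xi,-\nm\xi)$. In parallel, starting from the classical spectral description of the $\SO$- and $\Sp$-Hitchin fibers as Pryms, and quotienting by the $J_2(X)$-action arising from the center of $\SO(2m)$ or $\Sp(2m)$, I expect a dually symmetric description
$$
A_{\PSO(2m)}(X,\vec b),\ A_{\PSp(2m)}(X,\vec b)\;\simeq\; \bigl(\, K(S,\overline S)^\vee \times J_2(X)\,\bigr)^{G}
$$
(interpreted case by case), in which $G$ acts on $K(S,\overline S)^\vee$ as the Cartier dual of its $p^{\ast}$-action and on $J_2(X)$ via the Weil pairing dualizing the norm map. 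The identification $K(S,\overline S)^\vee \simeq J(S)/p^{\ast} J(\overline S)$ (passing to connected components in the even case) is obtained by dualizing the defining sequence $0\to K(S,\overline S)\to J(S)\to J(\overline S)\to 0$.

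Applying Cartier duality to the first sequence then yields
$$
0 \longrightarrow A_{\Spin(N)}(X,\vec b)^\vee \longrightarrow K(S,\overline S)^\vee\times J_2(X) \longrightarrow G \longrightarrow 0,
$$
and comparison with the subgroup description from the previous step finishes the proof. The main obstacle will be the rigorous derivation of the adjoint-side spectral description in a form that visibly matches the Cartier dual of Theorem~\ref{thm:main}. The odd case is the cleaner one, since $K(S,\overline S)=P(S,\overline S)$ is already connected and $Z(\Sp(2m))=\mu_2$; the even case requires careful bookkeeping of $H_0$, $H_1$, the Weil-annihilator condition, and the disconnectedness of $K(S,\overline S)$. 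In particular, one must verify that the characterization of $H_1$ as the annihilator of $H_0$ under the Weil pairing is precisely what makes the Cartier-dual $2$-group on the $\PSO(2m)$ side match the $\Spin(2m)$ side.
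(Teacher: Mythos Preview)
Your underlying strategy---deduce the duality via Cartier duality of an isogeny---is exactly what the paper does, but your execution has a genuine gap and an unnecessary detour that the paper's route avoids.

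The gap is that your adjoint-side description is only ``expected'', not derived. You propose to realize $A_{\PSO(2m)}$ and $A_{\PSp(2m)}$ as a fixed-point subvariety of $K(S,\overline S)^\vee\times J_2(X)$ and flag this as ``the main obstacle''. But the paper already has the needed descriptions, and they are much simpler: Corollaries~\ref{cor:spectralpsp} and~\ref{cor:spectralpso} identify both adjoint fibers with $P(S,\overline S)/J_2(X)$, obtained immediately from the $\Sp$/$\SO$ spectral data by quotienting by the center. No separate fixed-point characterization is required.

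The detour is your choice to dualize the sequence $0\to G\to K(S,\overline S)\times J_2(X)\to A_{\Spin(N)}\to 0$. The middle term is a disconnected group scheme (and in the even case $K(S,\overline S)$ itself is disconnected), so the ``dual sequence'' you write down, with middle term $K(S,\overline S)^\vee\times J_2(X)$, is not a statement about abelian varieties and needs justification you do not provide. The paper sidesteps this entirely: Lemma~\ref{lem:connected} first rewrites $A_{\Spin(N)}=K(S,\overline S)\times_{J_2(\overline S)}J_2(X)$ as a quotient of the \emph{connected} Prym, namely $P(S,\overline S)/[J_2(\overline S)/J_2(X)]^\vee$ (resp.\ $P(S,\overline S)/[(H_1/H_0)/J_2(X)]^\vee$ in the \'etale case). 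Lemma~\ref{lem:duality} then applies the standard dual-isogeny exact sequence to the map $P(S,\overline S)/J_2(X)\to P(S,\overline S)/J_2(\overline S)$ (resp.\ $\to P(S,\overline S)/(H_1/H_0)$), together with the known identification $(P(S,\overline S)/J_2(\overline S))^\vee\simeq P(S,\overline S)$ (resp.\ the principal polarization on $P(S,\overline S)$ in the \'etale case). The theorem then follows in one line. So rather than building a new description of the adjoint fiber to match a dualized Spin sequence, reduce both sides to quotients of $P(S,\overline S)$ and compare the kernels.
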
 

Here is a brief sketch of the main idea behind the proof of Theorem \ref{thm:main}. First,  spectral data describe an orthogonal bundle $V_L\to X$ in terms of a line bundle $L\in K(S,\overline S)$.  
In Section \ref{sec:hecke} we show that if $L$ is modified by a line bundle defined by a generic point  $p\in S$ (and its reflection by $\sigma$), then the new orthogonal bundle obtained is exactly the \emph{Hecke transformation} of $V_L$ at the point $\pi(p)$ introduced by Abe \cite{Abe:13}. The result, Corollary \ref{cor:hecke}, means that we can move around in the spectral data for orthogonal bundles via elementary  transformations on the bundle itself. This interpretation makes  it  transparent that a choice of lift of $V_L $ to a Clifford bundle induces a lift  on the transformed bundle as well (see Corollary \ref{cor:spin-hecke}).
In this way, a Clifford structure is naturally defined on $V_L$, $L=M^2$, given one on the orthogonal bundle with ``trivial'' spectral data. 
 We then show that the dependence of this structure on   $M$ is exactly given by the action of $J_2(\overline S)$ via the norm map.
 In Section \ref{sec:duality}, we prove directly that the abelian varieties appearing are dual to the ones for the projective symplectic and orthogonal cases.

 \medskip\noindent 
 {\bf Acknowledgments.} The authors warmly thank Steve Bradlow, Lucas Branco,  and Nigel Hitchin for discussions related to this work. 

\section{Preliminaries}

\subsection{Clifford bundles} \label{sec:clifford}

Let $(\VBbb,q)$ be a complex orthogonal vector space, $C(\VBbb)$  the Clifford algebra of $(\VBbb,q)$, and $C_+(\VBbb)$ the even part.
The \emph{special Clifford group} is defined as follows:
$$\SC(\VBbb)=\left\{ g\in  C_+^\times(\VBbb) \mid gvg^{-1}\in \VBbb\text{ for all } v\in \VBbb\right\}\ .$$
The induced action of $\SC(\VBbb)$ on $\VBbb$ is by orthogonal transformations and gives rise to  an exact sequence
\begin{equation} \label{eqn:clifford-sequence}
0\lra \CBbb^\times\lra \SC(\VBbb) \lra \SO(\VBbb)\lra 0\ .
\end{equation}
The \emph{Clifford (or spinor) norm} of an element $g\in \SC(\VBbb)$ is defined as
\begin{equation*}\label{eqn:clifford-norm}
\Nm(g)=q(v_1)\cdots q(v_k)\ ,
\end{equation*}
where $g=v_1\cdots v_k$, for $v_j\in \VBbb$ (any $g\in\SC(\VBbb)$ has such an expression). The \emph{spin group} is then $\Spin(\VBbb):=\Nm^{-1}(1)$. The restriction of \eqref{eqn:clifford-sequence} to $\Spin(\VBbb)$ becomes
\begin{equation*} \label{eqn:spin-sequence}
0\lra \ZBbb/2\lra \Spin(\VBbb) \lra \SO(\VBbb)\lra 0\ .
\end{equation*}
We set $\SC(N)=\SC(\CBbb^N)$, where $\CBbb^N$ has  the standard orthogonal structure. 

For a connected complex reductive Lie group $G$,  let
 $\Mcal_{G}$ denote the coarse moduli space of semistable $G$-Higgs bundles on $X$. In the case $G=\SC(N)$, the Clifford norm 
 induces a morphism  $\mathcal{M}_{\SC(N)}\rightarrow \Pic(X)$, which  we also denote  by $\Nm$.
For an $\SC(N)$ bundle $P$ and $L\in \Pic(X)$, we will denote by $P\otimes L$ the $\SC(N)$ bundle whose transition functions are  obtained by multiplying those of $P$ and $L$. It is then clear that:
\begin{equation} \label{eqn:square}
\Nm(P\otimes L)= \Nm(P)\otimes L^2\ .
\end{equation}

 Fix $p\in X$, and consider bundles $\mathcal{O}_X(dp)$, where $d\in \ZBbb$. 
 Then the preimage by $\Nm$ of the class of
 $[\Ocal_X(dp)]\in \Pic(X) $ depends only on the parity of $d$.
 Let
 $\Mcal^\pm_{\Spin(N)}$ be  the inverse images of the bundles $[\mathcal{O}_C(dp)]$, for $d=0,1$, respectively. 
Therefore, while by definition $\Mcal^+_{\Spin(N)}=\Mcal_{\Spin(N)}$, the space $\Mcal^-_{\Spin(N)}$ is a ``twisted'' component that does not correspond to a moduli space of $G$-bundles for any complex reductive $G$.
The connected components  $\Mcal^\pm_{\SO(N)}$ of $\Mcal_{\SO(N)}$ are labeled by the second Stiefel-Whitney class: $V\in  \Mcal^\pm_{\SO(N)} \iff w_2(V)=\pm 1$ (cf.\ \cite[Prop.\ 1.3]{BLS:98}),
and the  projection  \eqref{eqn:clifford-sequence}  induces a morphism 
$\Mcal^\pm_{\Spin(N)} \to \Mcal^\pm_{\SO(N)}\ .$
We refer to \cite[Prop.\ 3.4]{Oxbury:99} for more details.

In this paper, we mostly regard points in $\Mcal^\pm_{\SO(N)}$ as equivalence classes of rank $N$ semistable orthogonal Higgs bundles: i.e.\ a holomorphic 
bundle $V\to X$ with nondegenerate symmetric bilinear pairing  $(\, ,\, ):V\otimes V\to \Ocal_X$, and a fixed isomorphism $\det V\simeq \Ocal_X$, equipped with a holomorphic map $\Phi: V\to V\otimes K_X$ satisfying $(\Phi v, w)+(v,\Phi w)=0$ for all $v, w\in V$.

\subsection{Hecke transformations of orthogonal bundles} \label{sec:hecke-orthogonal}We first recall Hecke transformations for orthogonal bundles following T. Abe \cite{Abe:08}. Let $V\to X$ be an orthogonal bundle.
Choose a  point $p\in X$ and an isotropic line $\tau$ in the fiber $V_p$ of $V$ at $p$. 
Let $\tau^\perp$
denote the orthogonal subspace to $\tau$ in $V_p$, and set 
$\tau_1=V_p/\tau^\perp$. We view $\tau$ and $\tau_1$ as torsion sheaves on $X$ supported at $p$.
 Then we may define a locally free sheaf $ V^\flat\to X$ by the elementary transformation:
\begin{equation} \label{eqn:E-flat}
0\lra  V^\flat\to  V\to\tau_1\lra 0\ .
\end{equation}
Next, let $ V^\sharp=( V^\flat)^*$.
Since the orthogonal structure gives an isomorphism $ V^*\simeq V$, dualizing \eqref{eqn:E-flat} yields an exact sequence:
\begin{equation} \label{eqn:E-flat-dual}
0\lra  V\to  V^\sharp\to \tau_2\lra 0\ ,
\end{equation}
where $\tau_2$ is a torsion sheaf supported at $p$ of length $1$.
 Now the orthogonal structure also induces  maps
\begin{equation}\label{eqn:q-sharp}
V^\sharp\otimes  V^\sharp\lra \Ocal_X(p)\ \ \mbox{and} \
V^\flat\otimes  V^\sharp\lra \Ocal_X\ .
\end{equation}
Consider the subsheaf $ V^\flat\hookrightarrow  V^\sharp$ obtained by 
composing $V^\flat\hookrightarrow V$ in \eqref{eqn:E-flat}
with $V\hookrightarrow V^\sharp$ in
 \eqref{eqn:E-flat-dual}.
Then $ V^\sharp/ V^\flat$ is a torsion sheaf supported at $p$, and the fiber at $p$ is a rank $2$   orthogonal space.   Since $ V/ V^\flat$ is isotropic, there is a canonical splitting $ V^\sharp/ V^\flat\simeq  V/ V^\flat\oplus \tau_2$.  Finally, we define  $ V^\iota\subset  V^\sharp$ to be the kernel of the map $ V^\sharp\to  V/ V^\flat$. Equivalently, there is an exact sequence
\begin{equation} \label{eqn:E-iota}
0\lra  V^\flat\to  V^\iota\to \tau_2\lra 0\ .
\end{equation}
Then $ V^\iota$ inherits an orthogonal structure from \eqref{eqn:q-sharp}. Moreover, the exact sequence \eqref{eqn:E-iota} determines an isotropic line $\tau^\iota$ in the fiber of $V^\iota$ at $p$. Finally, from \eqref{eqn:E-flat} and \eqref{eqn:E-iota},  the trivialization of $\det V$ induces one for $\det V^\iota$.
\begin{definition} \label{def:iota}
The $\iota$-transform is the map: $( V, \tau )\mapsto ( V^\iota,  \tau^\iota)$.
\end{definition}

\subsection{Hecke transformations of Clifford bundles} \label{sec:hecke-spin}

We wish to extend the previous discussion to Clifford bundles.
For this it is useful to have a description of the $\iota$-transform  explicitly in terms of transition functions. 
  Let
 $P$ be a special Clifford  bundle and $V$ the associated
orthogonal bundle.
Let $\Delta\subset X$ be a disk centered at $p$, and $\sigma: \Delta\to P$ a section. Set $X^\ast=X\setminus\{p\}$.
This gives a trivialization of $P$ and a local frame $e_1,\ldots, e_N$ for $V$ on $\Delta$ with respect
to which the quadratic structure is, say, of the form $(e_i, e_j)=\delta_{i+j-1,N}$.
Similarly, we may choose a section of $\left.{P}\right|_{X^\ast}$. Set $\Delta^\ast=X^\ast\cap \Delta$.
Let $\widehat\varphi: \Delta^\ast\to \SC(N)$ denote the transition function gluing the  bundles $\left.{P}\right|_\Delta$ and $\left.{P}\right|_{X^\ast}$, 
and let $\varphi: \Delta^\ast\to \SO(N)$ be the quotient transition function for 
 $({\mathcal E},q)$.
The transformed bundle ${V}^\iota$  is defined by modifying  $\varphi$  
 by
$\zeta: \Delta^\ast\to \SO(N)$, where
\begin{equation} \label{eqn:zeta}
\zeta=\left(\begin{matrix}
z&&&&\\
&1&&&\\
&&\ddots &&\\
&&&1&\\
  &&&&z^{-1}
\end{matrix}\right)\ .
\end{equation}
Write $z=\exp(2\pi i\xi)$, $\imag \xi>0$,  and set 
\begin{equation} \label{eqn:zeta-hat}
\widehat \zeta(z)= \exp(\pi i\xi)
\exp\left((\pi i\xi/2)(e_1e_N-e_Ne_1)
\right)\ .
\end{equation}
One checks that $\widehat \zeta$ is well-defined under $\xi\mapsto\xi+1$,
and so it yields a map $\widehat \zeta: \Delta^\ast\to \SC(N)$.
Moreover,
the projection \eqref{eqn:clifford-sequence} of $\widehat \zeta$ to $\SO(N)$
 recovers $\zeta$.
Gluing  the trivial $\SC(N)$-bundles over $\Delta$ and $X^\ast$
via $\widehat\varphi(z) \widehat \zeta(z)$, 
we define a new Clifford bundle ${P}^\iota$.
The 
associated orthogonal bundle (with transition function $\varphi(z)\zeta(z)$) coincides with ${V}^\iota$.
With this understood, the main observation is the following.
\begin{proposition}
We have: $\Nm(P^\iota)\simeq \Nm(P)\otimes \Ocal_X(p)$. In particular,
$w_2( V^\iota)=-w_2( V)$.
\end{proposition}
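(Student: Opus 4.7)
The plan is to compute $\Nm(\widehat\zeta)$ directly and show it equals $z$, the transition function of $\Ocal_X(p)$ with respect to the trivializations over $\Delta$ and $X^\ast$ used in Section \ref{sec:hecke-spin}. Since $\Nm:\SC(N)\to\CBbb^\times$ is a group homomorphism and $\Nm(\widehat\varphi)$ is by construction the transition function of $\Nm(P)$, this reduction suffices.

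I will exploit the factorization $\widehat\zeta=c\cdot\exp(X)$ in $\SC(N)$, with scalar factor $c=\exp(\pi i\xi)\in\CBbb^\times$ and quadratic factor $X=(\pi i\xi/2)(e_1e_N-e_Ne_1)$. The element $X$ lies in the Lie algebra of $\Spin(N)$, since $\sofrak(N)\subset C_+(\VBbb)$ is standardly identified with the span of the $e_ie_j-e_je_i$. Consequently $\exp(X)\in\Spin(N)=\Nm^{-1}(1)$, so $\Nm(\exp X)=1$. For the scalar factor, $\Nm(c)=c^2=\exp(2\pi i\xi)=z$: this follows from the defining formula $\Nm(v_1\cdots v_k)=\prod q(v_i)$ upon writing $c=v\cdot v$ with $v\in\VBbb$ and $q(v)=c$. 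Combining, $\Nm(\widehat\zeta)=z$, yielding $\Nm(P^\iota)\simeq\Nm(P)\otimes\Ocal_X(p)$. The Stiefel--Whitney statement is then immediate from Section \ref{sec:clifford}: the parity of $\deg\Nm(P)$ determines the component $\Mcal^{\pm}_{\SO(N)}$ in which the associated orthogonal bundle lies, and tensoring by $\Ocal_X(p)$ flips this parity.

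The only real subtlety is the role of the scalar prefactor $\exp(\pi i\xi)$ in \eqref{eqn:zeta-hat}: it is present precisely to make $\widehat\zeta$ well-defined as a function of $z$, and it is also exactly what contributes the nontrivial factor of $z$ to the Clifford norm, while the rotation piece $\exp(X)$ sits entirely in $\Spin(N)$ and contributes trivially. In effect, the computation shows that the square-root ambiguity in lifting an $\SO(N)$-valued transition function to $\SC(N)$ is, for this Hecke elementary transformation, realized geometrically as a shift of the Clifford norm line bundle by $\Ocal_X(p)$.
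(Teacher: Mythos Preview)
Your proof is correct and follows essentially the same approach as the paper: both factor $\widehat\zeta$ into the scalar $\exp(\pi i\xi)$ and the element $\exp\left((\pi i\xi/2)(e_1e_N-e_Ne_1)\right)\in\Spin(N)$, observe that the latter has Clifford norm $1$, and conclude that $\Nm(\widehat\varphi\widehat\zeta)=z\cdot\Nm(\widehat\varphi)$. Your justification that $\Nm(c)=c^2$ for scalars $c\in\CBbb^\times$ (by writing $c=v\cdot v$ with $q(v)=c$) is a bit more explicit than the paper, which simply asserts the factor $\exp(2\pi i\xi)$ without comment.
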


\begin{proof}
From 
\eqref{eqn:zeta-hat},  $\Nm(P^\iota)$ is a line bundle with transition function on $\Delta^\ast$ given by:
$$
\Nm(\widehat\varphi\widehat \zeta)=\exp(2\pi i\xi)\Nm(\widehat\varphi(z)) \Nm
\left(\exp\left((\pi i\xi/2)(e_1e_N-e_Ne_1)\right)\right) = z\cdot\Nm(\widehat\varphi(z))  \ ,
$$
since $\exp\left((\pi i\xi/2)(e_1e_N-e_Ne_1)\right)\in \Spin(N)$; 
hence, the result.
\end{proof}

\begin{remark} \label{rmk:sign}
We could just as well have chosen a prefactor of $\exp(-\pi i\xi)$ in \eqref{eqn:zeta-hat} to obtain a Clifford bundle with norm $\Nm(P)\otimes \Ical_p$. The two Clifford bundles thus defined are isomorphic after tensoring by $\Ocal_X(p)$.
Later on, however,  we shall consider multiple  Hecke transformations at points of a reduced, not necessarily effective divisor $D$; hence, the points will have a sign. For convenience, at
 each point $p$ in the support of $D$ we shall choose $\exp(\pm\pi i\xi)$ in \eqref{eqn:zeta-hat} so that the change of Clifford norm corresponds to the sign of $p$. 
\end{remark}

\subsection{Spectral data for symplectic bundles} \label{sec:spectral-symplectic}
In this section, following \cite{Hitchin:07} and \cite{Hitchin:87b}, we briefly recall the explicit description of generic  fibers of the Hitchin map for the groups $\Sp(2m)$ and $\PSp(2m)$ in terms of spectral data . 
Let $E\to X$ be a symplectic bundle of rank $2m$ with pairing $\langle\, ,\,\rangle$, and $\Phi: E\rightarrow E\otimes K_X$ be a Higgs field such that $\langle v,\Phi w\rangle+\langle\Phi v,w\rangle=0$,
 for all sections $v, w\in E$. The coefficients of the characteristic polynomial of $\Phi$ gives an element $\vec b\in \mathcal{B}(N)$, 
 $N=2m+1$. 
Let $\pi: {\rm tot} (K_X)\to X$ be the total space of the canonical bundle on $X$, and
let $\lambda: {\rm tot} (K_X)\to \pi^\ast K_X$ denote the tautological section. We assume the last coefficient $b_{2m}$ of $\vec b$ has simple zeros at $Z(b_{2m})\subset X$, and that $b_{2m-2}$ is nonzero on $Z(b_{2m})$.  The spectral curve associated to $\vec b$ is:
\begin{equation} \label{eqn:spectral-curve}
S :=\left\{ w\in {\rm tot} (K_X) \mid \lambda^{2m}+\pi^\ast(b_2)\lambda^{2m-2}+\cdots +\pi^\ast(b_{2m})=0 \text{ at } w \right\}
\end{equation}
The assumptions guarantee  that $S$ is smooth (see \cite{BNR}). Let $\sigma$ denote the involution $w\mapsto -w$ on $S$, and $\overline S=S/\sigma$.
In this case, the kernel $K(S,\overline S)$ of the norm map $\nm_{S/\overline S} : J(S)\to J(\overline S)$ is connected and so coincides with the Prym variety $P(S,\overline S)$.
 Let $L\in P(S,\overline S)$, and consider $U=L\otimes \left(K_S\otimes \pi^\ast K_X^{-1}\right)^{1/2}$. Since $K_S=\pi^\ast K_X^{2m}$, a square root of $K_S\otimes \pi^*K_X^{-1}$ can be given by a choice of theta characteristic on $X$, which we fix once and for all. We then have the following result of Hitchin.
\begin{theorem}[{\cite[Sec.\ 5.10]{Hitchin:87b}}]             \label{theorem:spspectral}
	Let $\vec b \in \mathcal{B}(N)$ be such that $b_{2m}$ has simple zeros and $b_{2m-2}$ is nonzero on $Z(b_{2m})$.	
	Then $h^{-1}_{\Sp(2m)}(\vec{b})$ identifies with points on the Prym variety $P(S,\overline{S})$. The correspondence sends $L\in P(S,\overline S)$ to 
$E=\pi_*(U)$, where 
	$$U=L\otimes (K_S\otimes \pi^*K_X^{-1})^{-1/2}\in P(S,\overline{S})\ .$$
	The Higgs field $\Phi$ is obtained by multiplication with $\lambda$. Conversely,  given  a symplectic higgs bundle $({E},\Phi)$,
	let $U\to S$ be the line bundle 
	$$U:=\ker(\pi^*E \stackrel{\Phi}{\lra} \pi^*(E\otimes K_X))\ .$$
	Then $L=U\otimes (K_S\otimes \pi^*K_X^{-1})^{1/2}\in P(S,\overline S)$.
\end{theorem}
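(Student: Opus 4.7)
The approach is classical Beauville--Narasimhan--Ramanan (BNR) spectral theory adapted to the symplectic setting: the line bundle $L$ on $S$ encodes eigenspace data of $\Phi$, and the involution $\sigma$ of the double cover $S \to \overline S$ produces the symplectic pairing from the Prym condition. Write $K_{S/X} := K_S \otimes \pi^*K_X^{-1}$; since $K_S = \pi^* K_X^{2m}$, a square root $K_{S/X}^{1/2}$ is fixed once and for all by the choice of theta characteristic on $X$. Given $L \in P(S, \overline S)$, I would set $U := L \otimes K_{S/X}^{1/2}$ and $E := \pi_* U$, a rank $2m$ vector bundle on $X$. Multiplication by the tautological section $\lambda$ restricted to $S$ gives $U \to U \otimes \pi^* K_X$; pushing forward and applying the projection formula yields $\Phi : E \to E \otimes K_X$, and the defining equation \eqref{eqn:spectral-curve} of $S$ forces the characteristic polynomial of $\Phi$ to be $\vec b$. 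The genericity hypothesis --- simple zeros of $b_{2m}$, and $b_{2m-2}$ nonvanishing on $Z(b_{2m})$ --- guarantees that $S$ is smooth, so $U$ is a genuine line bundle.

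For the symplectic structure, the Prym relation $\sigma^* L \cong L^{-1}$ combined with the $\sigma$-invariance of $K_{S/X}^{1/2}$ gives an isomorphism $\sigma^* U \cong U^{-1} \otimes K_{S/X}$. Pushing forward via $\pi$, using $\pi_*\sigma^* U = \pi_* U$ (since $\pi \circ \sigma = \pi$) together with relative Grothendieck--Serre duality $\pi_*(U^{-1} \otimes K_{S/X}) \cong (\pi_* U)^\vee$, produces a non-degenerate pairing $E \otimes E \to \Ocal_X$. Compatibility with $\Phi$ is then immediate from $\sigma^*\lambda = -\lambda$, which after pushforward gives $\langle \Phi v, w\rangle + \langle v, \Phi w\rangle = 0$. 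The subtle question is whether the resulting form is symmetric or skew: this is governed by comparing the two natural symmetries of the pairing $\sigma^* U \otimes U \to K_{S/X}$ --- the tautological swap of factors versus application of $\sigma^*$ --- and reduces to a local calculation at the ramification divisor of $S \to \overline S$, namely the zeros of $\lambda$, where $\sigma$ acts by $-1$ on tangent vectors. This local sign is what distinguishes the symplectic correspondence from its orthogonal analogue, and it is the main technical obstacle of the proof.

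The inverse direction goes as follows: given a symplectic Higgs bundle $(E, \Phi)$ with characteristic polynomial $\vec b$, define $U := \ker(\pi^*\Phi - \lambda \cdot \id : \pi^*E \to \pi^*(E \otimes K_X))$. Genericity ensures that $U$ is a line bundle on $S$, and the usual BNR argument yields $\pi_* U \cong E$ with $\Phi$ recovered as multiplication by $\lambda$. The transpose relation $\Phi^T = -\Phi$ forced by the symplectic form translates, via the same duality used above, into an isomorphism $\sigma^* U \cong U^{-1} \otimes K_{S/X}$; setting $L := U \otimes K_{S/X}^{-1/2}$ then gives $\sigma^* L \cong L^{-1}$, placing $L$ in $P(S, \overline S)$. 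That the two assignments $L \mapsto (E, \Phi)$ and $(E, \Phi) \mapsto L$ are mutually inverse follows from the BNR equivalence between generic Higgs bundles on $X$ with spectral curve $S$ and line bundles on $S$, and the two compatibility checks above ensure that this equivalence restricts to a bijection between $P(S, \overline S)$ and $h^{-1}_{\Sp(2m)}(\vec b)$.
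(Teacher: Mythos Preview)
Your argument is correct and follows the same BNR-plus-Prym strategy as the paper, which does not give a self-contained proof but cites Hitchin and only sketches the construction of the symplectic pairing. The one place worth comparing is the sign determination you flag as ``the main technical obstacle'': rather than invoking relative Grothendieck--Serre duality and deferring to a local calculation at ramification points, the paper fixes a linearization $\widehat\sigma$ of $\sigma$ on $U$ and writes the pairing explicitly as
\[
\langle u,v\rangle = \tr_{S/X}\!\left(\frac{\widehat\sigma(u)\,v}{d\pi}\right).
\]
Skew-symmetry then drops out immediately from the fact that $\widehat\sigma^{2}=-1$ (a consequence of $S\to\overline S$ being ramified), together with factoring $\tr_{S/X}=\tr_{\overline S/X}\circ\tr_{S/\overline S}$. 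This is the same content as your local sign check---your observation that $\sigma$ acts by $-1$ on tangent vectors at the fixed locus is exactly why $\widehat\sigma^{2}=-1$---but packaged so that no separate computation is needed; you may find it cleaner to adopt this formulation. One cosmetic slip: the theorem has $U=L\otimes K_{S/X}^{-1/2}$, not $+1/2$ as you wrote, though this is harmless since $P(S,\overline S)$ is stable under inversion.
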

 The symplectic structure on $E=\pi_\ast U$ is defined as follows.
 Let $\widehat\sigma$ denote a linearization of $\sigma$ on $U$. 
 Then $(\widehat\sigma)^2$ acts as $-1$ on the fibers.   It suffices to define a nondegenerate skew-pairing on sections of $U$ over open sets $A\subset X$.  For two such sections $u,v$, let
\begin{equation} \label{eqn:symp-pairing}
\langle u, v\rangle  =\tr_{S/X}\left(\frac{\hat\sigma(u)v}{d\pi}\right) \ .
\end{equation}
Since $\hat\sigma$ squares to $-1$, we have
\begin{equation*} \label{eqn:minus}
\left(\frac{\hat\sigma(u)v}{d\pi}\right) (z) = - \left(\frac{\hat\sigma(v)u}{d\pi}\right) (\sigma(z)) \ ,
\end{equation*}
and hence since $\tr_{S/X}(\, \cdot\, )=\tr_{\overline S/X}\tr_{S/\overline S}(\, \cdot\, )$, the pairing
 $\langle\ ,\ \rangle$ is skew. The fact that it is nondegenerate follows as in \cite{Hitchin:87b}.

Let $\mathcal{M}^0_{\PSp(2m)}$ be the moduli space of semistable Higgs bundles for $\PSp(2m)$ that lift to  $\Sp(2m)$ bundles, and let $h_{\PSp(2m)}: \mathcal{M}^0_{\PSp(2m)}\rightarrow \Bcal(N)$ be the Hitchin map. Since the Higgs field take values in the adjoint bundles, the natural projection of $\Sp(2m)\rightarrow \PSp(2m)$ give a natural projection $\mathcal{M}_{\Sp(2m)}\rightarrow \mathcal{M}^0_{\PSp(2m)}$. Now Theorem \ref{theorem:spspectral} has  the following consequence:
\begin{corollary}\label{cor:spectralpsp}
	The fibers of the Hitchin map $h_{\PSp(2m)}$ are in one to one correspondence with points $P(S,\overline{S})/J_2(X)$, where $J_2(X)$ acts through pulling back by $\pi: S\to X$. 
\end{corollary}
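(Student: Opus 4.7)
The plan is to derive Corollary \ref{cor:spectralpsp} from Theorem \ref{theorem:spspectral} by taking a quotient by the natural action of $J_2(X)$ reflecting the kernel $\ZBbb/2$ of $\Sp(2m)\to \PSp(2m)$.

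First I would observe that since the center of $\Sp(2m)$ equals $\ZBbb/2$, tensoring a symplectic Higgs bundle $(E,\Phi)$ by a $2$-torsion line bundle $\eta\in J_2(X)$ yields a new symplectic Higgs bundle $(E\otimes\eta,\, \Phi\otimes 1)$ (the pairing takes values in $\eta^{\otimes 2}\simeq \Ocal_X$), with the same characteristic polynomial and the same associated $\PSp(2m)$-Higgs bundle. Conversely, two stable $\Sp(2m)$-Higgs bundles have isomorphic images in $\Mcal^0_{\PSp(2m)}$ if and only if they differ by such a twist: a local lift of a $\PSp(2m)$-isomorphism to an $\Sp(2m)$-morphism is unique up to sign, and the \v{C}ech cocycle measuring the failure of the lifts to agree on overlaps is a class in $H^1(X,\ZBbb/2)$, giving an element of $J_2(X)$. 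This $J_2(X)$-action preserves each Hitchin fiber, so
\[
h^{-1}_{\PSp(2m)}(\vec{b}) \;=\; h^{-1}_{\Sp(2m)}(\vec{b})\,\big/\, J_2(X)\ .
\]

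Next I would translate this twist to spectral data via Theorem \ref{theorem:spspectral}. If $L\in P(S,\overline{S})$ corresponds to $(E,\Phi)$ with $E=\pi_\ast U$ and $U=L\otimes (K_S\otimes \pi^\ast K_X^{-1})^{-1/2}$, then by the projection formula
\[
E\otimes \eta \;=\; \pi_\ast(U)\otimes \eta \;=\; \pi_\ast(U\otimes \pi^\ast\eta)\ ,
\]
and the Higgs field on the right remains multiplication by $\lambda$. Hence the spectral line bundle associated with $E\otimes\eta$ is $L\otimes \pi^\ast\eta$, showing that the $J_2(X)$-action on $P(S,\overline{S})$ is exactly through pullback $\pi^\ast$. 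To confirm that $\pi^\ast\eta$ lands in $P(S,\overline{S})$, factor $\pi=\overline{\pi}\circ p$ through the double cover $p:S\to \overline{S}$; then $\pi^\ast\eta=p^\ast(\overline{\pi}^\ast\eta)$, and hence
\[
\nm_{S/\overline{S}}(\pi^\ast\eta) \;=\; (\overline{\pi}^\ast\eta)^{\otimes 2} \;=\; \overline{\pi}^\ast(\eta^{\otimes 2}) \;=\; \Ocal_{\overline{S}}\ ,
\]
so $\pi^\ast\eta\in K(S,\overline{S})=P(S,\overline{S})$, the last equality using that $K$ is connected in the symplectic case.

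The main obstacle is the first step, namely cleanly identifying the fibers of $\Mcal_{\Sp(2m)}\to \Mcal^0_{\PSp(2m)}$ as $J_2(X)$-orbits at generic (stable) points; this rests on the standard fact that isomorphisms of stable symplectic Higgs bundles are lifted from $\PSp(2m)$-isomorphisms only up to the central $\pm 1$, together with the observation that the Higgs field, valued in the adjoint bundle, is insensitive to such twisting. Once that is in hand, combining with the spectral computation above yields the claimed identification $h^{-1}_{\PSp(2m)}(\vec{b}) \simeq P(S,\overline{S})/J_2(X)$, with $J_2(X)$ acting via $\pi^\ast$.
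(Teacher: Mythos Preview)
Your argument is correct and is precisely the natural justification the paper has in mind: the text states the corollary as an immediate consequence of Theorem~\ref{theorem:spspectral} and the projection $\Mcal_{\Sp(2m)}\to\Mcal^0_{\PSp(2m)}$, without giving further details. Your proposal simply spells out those details---identifying the fibers of this projection with $J_2(X)$-orbits and matching the twist $E\mapsto E\otimes\eta$ with $L\mapsto L\otimes\pi^\ast\eta$ via the projection formula---so there is nothing to compare.
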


\subsection{Spectral data for odd orthogonal bundles}\label{sec:spectraloddorthogonal}Let $V$ be a vector bundle of rank $2m+1$ with a nondenegerate symmetric bilinear $(\, ,\, )$ form along with a trivialization $\det {V}\simeq \mathcal{O}_X$. Let $\Phi: V\rightarrow V\otimes K_X$ be a Higgs field satisfying
$( v,\Phi w)+(\Phi v,w)=0$. Then 
$$\det(\lambda-\Phi)=\lambda(\lambda^{2m}+b_{2}\lambda^{2m-2}+
\dots+b_{2m})\ .$$
As in the case of the symplectic bundle, we assume that the zeros $Z(b_{2m})$ of $b_{2m}$ are simple and  that $b_{2m-2}$ is nonvanishing on  $Z(b_{2m})$.  In \cite[Sec.\ 4.1]{Hitchin:07} Hitchin shows that the one dimensional zero eigenspace of $\Phi$ generates the line bundle $V_0$  isomorphic to $K_X^{-m}$,  and that the quotient $V_1=V/V_0$ is of the form $E\otimes K_X^{1/2}$, where $E$ is a symplectic bundle. The symplectic form $\langle\, ,\, \rangle$  on $E$ is induced by the formula
$\langle v_1,v_2\rangle=(v_1,\Phi v_2)$, and the Higgs field $\Phi$ restricted to $E\otimes K_X^{1/2}$ induces a symplectic Higgs field. We  again define the spectral curve  $S$ 
by \eqref{eqn:spectral-curve}, and so the orthogonal bundle gives
rise to $L\in P(S,\overline S)$.

Going in the other direction, starting with a symplectic bundle $(E,\Phi')$ we define an orthogonal structure on $V=V_0\oplus V_1$ by using $b_{2m}$ on $V_0$ and 
\begin{equation}\label{eqn:odd-orthogonal-pairing}
(u, v)  =\tr_{S/X}\left(\frac{\hat\sigma(u)v\cdot \lambda^{-1}}{d\pi}\right) 
\end{equation}
(see \eqref{eqn:symp-pairing}) on $V_1$. Because the section $\lambda$ is odd, this becomes and even pairing.
This only defines the orthogonal structure on $V$ away from the ramification locus, however, and extending it  to $X$
requires more information. Since the details are not important for this paper, we simply state the result.


\begin{theorem}[{\cite{Hitchin:07}}] \label{thm:odd-orthogonal}
	Let $(E,\Phi')$ be a generic symplectic Higgs bundle of rank $2m$. Then an associated $\SO(2m+1)$ Higgs bundle is determined by a vector $e_a \in E_a\otimes K_a^{m-1/2}$ for each point $a \in Z(b_{2m})$ satisfying a certain compatibility condition with $\Phi'$.
   Moreover, the generic fibers of $h_{\SO(2m+1)}^{\pm}$ for each connected component $\mathcal{M}^{\pm}_{\SO(2m+1)}$ are isomorphic to $P(S,\overline{S})/ J_2(\overline{S})$, where the $J_2(\overline S)$-action is via the pullback by $p: S\to \overline S$.
\end{theorem}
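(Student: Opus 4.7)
The plan is to construct the $\SO(2m+1)$-Higgs bundle explicitly from the spectral datum $L\in P(S,\overline S)$ along the lines of the symplectic case (Theorem \ref{theorem:spspectral}), then analyze what additional local data is needed at the ramification locus $Z(b_{2m})$, and finally identify the resulting finite collection of choices as a torsor over $J_2(\overline S)$.

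For the construction, first recover $(E,\Phi')$ from $L$ via Theorem \ref{theorem:spspectral} and write $V = V_0 \oplus V_1$ with $V_0 \simeq K_X^{-m}$ and $V_1 = E\otimes K_X^{1/2}$. The orthogonal form on $V_0$ comes from $b_{2m}$ (extended across $Z(b_{2m})$ using the genericity hypothesis $b_{2m-2}\neq 0$ there), while on $V_1$ it is given by formula \eqref{eqn:odd-orthogonal-pairing}. The fact that \eqref{eqn:odd-orthogonal-pairing} is symmetric rather than skew follows because $\lambda^{-1}$ is odd under $\sigma$: the trace-formula manipulation following \eqref{eqn:symp-pairing} picks up an extra sign relative to the symplectic case and so yields an even pairing. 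The Higgs field $\Phi$ on $V$ restricts to $\Phi'$ on $V_1$ and maps $V_0$ into $V_1\otimes K_X$ via the inclusion dual to the zero eigenvector of $\Phi$.

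The heart of the argument is the local extension of the orthogonal structure across each $a\in Z(b_{2m})$: there $\lambda$ vanishes, \eqref{eqn:odd-orthogonal-pairing} acquires a pole, and the form a priori degenerates. A direct local computation in a uniformizer centered at $a$ shows that a regular nondegenerate extension exists iff one supplements the data with a vector $e_a\in E_a\otimes K_a^{m-1/2}$ lying in $\ker\Phi'(a)$ and normalized against the first jet of $\Phi'$ so as to cancel the pole of $\lambda^{-1}$; this pins $e_a$ down up to an overall sign. Globally, the collection $\{e_a\}$ is further constrained by the condition $\det V\simeq \Ocal_X$, which cuts the $(\ZBbb/2)^{|Z(b_{2m})|}$ of sign patterns down to a finite group.

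The main obstacle is identifying this finite group with $J_2(\overline S)$ and verifying that the corresponding equivalence on the spectral side is precisely pullback from $\overline S$. For this I would use that in the odd case $p\colon S\to \overline S$ is a double cover branched over the image of $Z(b_{2m})$, and invoke the standard Prym-theoretic description of $J_2(\overline S)$ via the branch data of $p$: each $\eta\in J_2(\overline S)$ produces a well-defined sign pattern at the branch points of $p$, and the action $L\mapsto L\otimes p^\ast\eta$ on $P(S,\overline S)$ realizes exactly the permutation of sign patterns $\{e_a\}$. Two spectral line bundles then yield isomorphic $\SO(2m+1)$-Higgs bundles precisely when they differ by $p^\ast J_2(\overline S)$, giving the generic fiber $P(S,\overline S)/J_2(\overline S)$; both Stiefel-Whitney components $\Mcal^\pm$ are obtained because flipping a single $e_a\mapsto -e_a$ changes $w_2$.
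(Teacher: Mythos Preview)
The paper does not actually prove this theorem. It is stated as a result of Hitchin \cite{Hitchin:07}, and the text immediately preceding the statement says explicitly: ``Since the details are not important for this paper, we simply state the result.'' The paper only provides the setup (the decomposition $V=V_0\oplus V_1$, the pairing \eqref{eqn:odd-orthogonal-pairing} on $V_1$ away from $Z(b_{2m})$, and the remark that extending it across the ramification locus requires extra data), then defers everything else to the reference. So there is no proof in the paper for your proposal to be compared against.

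Your sketch is a plausible outline of Hitchin's argument, but a couple of points are not quite as Hitchin does them. First, the reduction of the $2^{|Z(b_{2m})|}$ sign choices does not come from a $\det V\simeq\Ocal_X$ constraint; rather, a global simultaneous sign flip $e_a\mapsto -e_a$ for all $a$ produces the same extension class, cutting the count by a single factor of $2$. Second, the identification of the remaining finite torsor with $P(S,\overline S)/J_2(\overline S)$ in Hitchin's paper goes through the squaring map on $P(S,\overline S)$: writing $L=M^2$, one uses $M$ to produce canonical trivializations of $L$ at the fixed points of $\sigma$ (i.e.\ a canonical choice of the $e_a$), and the ambiguity in the square root $M$ is exactly $P_2(S,\overline S)=p^\ast J_2(\overline S)$. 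Your appeal to ``the standard Prym-theoretic description of $J_2(\overline S)$ via the branch data'' is pointing at the right phenomenon but skips this mechanism, which is what actually pins down the quotient. Finally, the assertion that flipping a single $e_a$ changes $w_2$ is the right heuristic for why both components $\Mcal^\pm$ appear, but it is not established by anything in your sketch; in Hitchin's treatment this is deduced from how the cover $P'\to P(S,\overline S)$ interacts with the two components.
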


\subsection{Spectral data for even orthogonal bundles} \label{sec:spectralevenorthogonal}
In this case, a point in $\Bcal(N)$ is of the form: $\vec b=(b_2, \ldots, b_{2m-2}, p_m)$, $N=2m$.
We assume  $p_m$ has simple zeros at $Z(p_m)$ and that $b_{2m-2}$ is nonvanishing on $Z(p_m)$. The curve defined analogously to \eqref{eqn:spectral-curve} is:
$$
S' :=\left\{ w\in {\rm tot} (K_X) \mid \lambda^{2m}+\pi^\ast(b_2)\lambda^{2m-2}+\cdots +\pi^\ast(b_{2m-2})\lambda^2+\pi^\ast(p_m^2)=0 \text{ at } w \right\}\ .
$$
The zeros $Z(p_m)$ (which we view also as points in $S'$) are singularities of $S'$.
 With the assumption above these are the only singularities, and they consist of $2m(g-1)$ ordinary double points. Let $\sigma$ be the involution on $S'$ sending $\lambda$ to $-\lambda$. The fixed points of $\sigma$ are exactly the  singularities. If $S$ denotes the normalization of $S'$, $\mu: S\to S'$, then
 since the singularities  are double points, $\sigma$ extends to an involution of $S$. 
The relevant diagram now is the following:
\begin{equation}
\begin{split}
\xymatrix{
&S\ar[dl]_\mu \ar[d]^{p} \ar@/^2pc/[dd]^{\pi}    \\
	S' \ar[r]^{\ p'} \ar[dr]_{\pi'} & \overline{S} \ar[d]^{\overline\pi}\\
	 &X
}
\end{split}
\end{equation}
The double covering $p: S\to \overline S$ is unramified, and hence it is determined by a line bundle $\Lcal\in J_2(\overline S)$.  We will need the following:
\begin{lemma} \label{lem:L}
The line bundle $\Lcal\to\overline S$ is in the kernel of the norm map $\nm_{\overline S/X}:J(\overline S)\to J(X)$.
\end{lemma}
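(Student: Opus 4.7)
The plan is to compute $\det(\pi_\ast\Ocal_S)$ in two different ways and compare. The first computation uses the étale cover $p:S\to\overline S$ to extract $\nm_{\overline S/X}(\Lcal)$ from the determinant; the second computes the same determinant directly from the spectral-curve pushforward formula, and the two answers must agree.

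For the first, I would decompose $p_\ast\Ocal_S\simeq\Ocal_{\overline S}\oplus\Lcal$ into $\sigma$-eigensheaves (here $\Lcal^{\otimes 2}\simeq\Ocal_{\overline S}$), apply $\overline\pi_\ast$, and take top exterior powers. Using the defining determinantal formula $\det(f_\ast M)\simeq\nm_f(M)\otimes\det(f_\ast\Ocal_Y)$ for a finite map $f:Y\to Z$ between smooth curves and any line bundle $M$ on $Y$, this yields
$$
\det(\pi_\ast\Ocal_S)\simeq\det(\overline\pi_\ast\Ocal_{\overline S})^{\otimes 2}\otimes\nm_{\overline S/X}(\Lcal).
$$

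For the second, I would compute each determinant using BNR. The curve $\overline S$ is a smooth spectral curve of degree $m$ in $\operatorname{tot}(K_X^2)$ (with coordinate $\mu=\lambda^2$, satisfying $\mu^m+b_2\mu^{m-1}+\cdots+b_{2m-2}\mu+p_m^2=0$), so $\overline\pi_\ast\Ocal_{\overline S}\simeq\bigoplus_{i=0}^{m-1}K_X^{-2i}$ and $\det\simeq K_X^{-m(m-1)}$. For the singular spectral curve $S'\subset\operatorname{tot}(K_X)$, the spectral algebra structure still gives $\pi'_\ast\Ocal_{S'}\simeq\bigoplus_{i=0}^{2m-1}K_X^{-i}$, with determinant $K_X^{-m(2m-1)}$. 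The normalization $\mu:S\to S'$ fits in $0\to\Ocal_{S'}\to\mu_\ast\Ocal_S\to T\to 0$, where $T$ is a length-$1$ skyscraper at each of the $2m(g-1)$ ordinary double points of $S'$. Under the genericity assumptions these nodes lie in bijection with $Z(p_m)$, so $\pi'_\ast T\simeq\Ocal_{Z(p_m)}$, with determinant $\Ocal_X(Z(p_m))\simeq K_X^m$. Hence $\det(\pi_\ast\Ocal_S)\simeq K_X^{-m(2m-1)+m}\simeq K_X^{-2m(m-1)}\simeq\det(\overline\pi_\ast\Ocal_{\overline S})^{\otimes 2}$.

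Equating the two expressions for $\det(\pi_\ast\Ocal_S)$ forces $\nm_{\overline S/X}(\Lcal)\simeq\Ocal_X$, which is the lemma. The main obstacle I expect to take care over is the local analysis at $Z(p_m)$: one must confirm that the hypotheses on $\vec b$ (simple zeros of $p_m$, nonvanishing $b_{2m-2}$ on $Z(p_m)$) force every singularity of $S'$ to be an ordinary double point lying over a single zero of $p_m$, and that the partial normalization contributes exactly length $1$ to the torsion quotient at each such point. Once that local picture is pinned down, the numerical identity $-m(2m-1)+m=-2m(m-1)$ makes the matching automatic.
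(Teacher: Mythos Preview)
Your proposal is correct and follows essentially the same argument as the paper: both compute $\det(\pi_\ast\Ocal_S)$ in two ways, once via the decomposition $p_\ast\Ocal_S\simeq\Ocal_{\overline S}\oplus\Lcal$ and the norm formula, and once via the BNR pushforward for $S'$ together with the normalization sequence $0\to\Ocal_{S'}\to\mu_\ast\Ocal_S\to\Ocal_{Z(p_m)}\to 0$, arriving at the same numerical identity $-m(2m-1)+m=-2m(m-1)$. Your flagged concern about the local structure at $Z(p_m)$ is exactly the point the paper handles by invoking the genericity hypotheses to conclude the singularities are ordinary double points.
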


\begin{proof}
We prove this by computing $\det\pi_\ast\Ocal_S$ in two different ways. First, the normalization gives an exact sequence: 
$$
0\lra \Ocal_{S'}\lra \mu_\ast \Ocal_S\lra \Ocal_{Z(p_m)}\lra 0\ .
$$
Since $\pi=\pi'\circ\mu$, this implies
\begin{equation} \label{eqn:seq-first}
0\lra (\pi')_\ast \Ocal_{S'}\lra \pi_\ast \Ocal_S\lra \Ocal_{Z(p_m)}\lra 0\ .
\end{equation}
Now from general facts about spectral curves, we get $\det (\pi')_{\ast}\Ocal_{S'}\simeq K_X^{-2m(m-1)}$ (cf.\ \cite[Sec.\ 3]{BNR}).
So from \eqref{eqn:seq-first} we have
\begin{equation} \label{eqn:det-first}
\det\pi_\ast\Ocal_S\simeq K_X^{-m(2m-1)}\otimes K_X^m=K_X^{-2m(m-1)}\ .
\end{equation}
On the other hand, by the definition of $\Lcal$, $p_\ast\Ocal_S=\Ocal_{\overline S}\oplus \Lcal$. It follows that
\begin{align}
\pi_\ast\Ocal_S&=\overline\pi_\ast\left(p_\ast\Ocal_S\right)=\overline \pi_\ast\Ocal_{\overline S}\oplus\overline\pi_\ast\Lcal\ , \notag \\
\det\pi_\ast\Ocal_S&\simeq \left(\det\overline \pi_\ast\Ocal_{\overline S}\right)^{2}\otimes\nm_{\overline S/X}\Lcal\ .
\label{eqn:det-second}
\end{align}
As before (except now note that $\overline S\subset {\rm tot} K_X^2$), we have
$\overline\pi_\ast\Ocal_{\overline S}\simeq \oplus_{i=0}^{m-1} K_X^{-2i}$. Plugging this into \eqref{eqn:det-second}
 we obtain
 $$
\det\pi_\ast\Ocal_S\simeq K_X^{-2m(m-1)}\otimes \nm_{\overline S/X}\Lcal\ .
 $$
 Now comparing this with \eqref{eqn:det-first}, we conclude that $\nm_{\overline S/X}\Lcal\simeq\Ocal_X$.
\end{proof}

Returning to the spectral data, as in the symplectic and odd orthogonal cases, for $L\in K(S,\overline S)$ we let $V=\pi_\ast U$, where $U=L\otimes (K_S\otimes \pi^\ast K_X^{-1})^{-1/2}$.
The pairing is defined by
\begin{equation}
\label{eqn:even-orthogonal-pairing}
(u, v)  =\tr_{S/X}\left(\frac{\hat\sigma(u)v}{d\pi}\right) 
\end{equation}
 as in \eqref{eqn:symp-pairing}, except that now $\widehat \sigma$ squares to the identity so that the pairing is symmetric.
Let $P(S,\overline{S})\subset K(S,\overline S)$ denote the connected component of the identity of $\ker\nm_{S/\overline S}$. Then we have the following theorem due to N. Hitchin \cite{Hitchin:87b} 
\begin{theorem}
	\label{thm:even-orthogonal}
  The correspondence described above identifies a generic  fiber of the Hitchin map $h_{\SO(2m)}^{\pm}$  with  $P(S,\overline S)$
 .
\end{theorem}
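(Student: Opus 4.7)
The plan is to mirror the proof of Theorem \ref{theorem:spspectral}, accounting for two key modifications in the even orthogonal setting: the spectral curve $S'$ is singular at the nodes $Z(p_m)$, so one must work on its normalization $S$; and the cover $p:S\to\overline S$ is \'etale (rather than ramified, as in the symplectic case), so the lift $\hat\sigma$ of $\sigma$ to $U$ may be chosen with $\hat\sigma^2=+1$. The latter is what produces a symmetric rather than skew pairing.

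For the forward direction, given $L\in P(S,\overline S)$ I set $U=L\otimes (K_S\otimes \pi^\ast K_X^{-1})^{-1/2}$ and $V=\pi_\ast U$, with Higgs field $\Phi$ induced by the tautological section $\lambda$. That $V$ has rank $2m$ with characteristic polynomial giving $\vec b$ is standard spectral curve theory. The antiequivariance $\sigma^\ast\lambda=-\lambda$ together with $\hat\sigma^2=+1$ yields both the symmetry of \eqref{eqn:even-orthogonal-pairing} and the orthogonality relation
\[
(\Phi v,w)+(v,\Phi w) = \tr_{S/X}\!\left(\frac{-\lambda\,\hat\sigma(v)\,w + \lambda\,\hat\sigma(v)\,w}{d\pi}\right)=0.
\]
Nondegeneracy on the smooth locus of $\pi$ follows from a standard residue argument. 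For the determinant, $\det\pi_\ast U\simeq \det\pi_\ast\Ocal_S\otimes \nm_{S/X}(U)$: since $L\in K(S,\overline S)$, one has $\nm_{S/X}L\simeq\Ocal_X$ by the factorization $\nm_{S/X}=\nm_{\overline S/X}\circ \nm_{S/\overline S}$; using $\det\pi_\ast\Ocal_S\simeq K_X^{-2m(m-1)}$ (as shown in the proof of Lemma \ref{lem:L}) together with the half-twist contribution, one obtains $\det V\simeq \Ocal_X$.

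For the inverse direction, given a generic $\SO(2m)$-Higgs bundle $(V,\Phi)$, form $U = \ker(\Phi-\lambda: \pi^\ast V\to \pi^\ast V\otimes \pi^\ast K_X)$ on $S$ and set $L=U\otimes (K_S\otimes\pi^\ast K_X^{-1})^{1/2}$. The orthogonal pairing induces an isomorphism $\sigma^\ast U\simeq U^{-1}\otimes (K_S\otimes\pi^\ast K_X^{-1})^{-1}$, whence $\nm_{S/\overline S}L\simeq \Ocal_{\overline S}$, so $L\in K(S,\overline S)$. Since $K(S,\overline S)$ has two connected components (each a torsor over $P(S,\overline S)$) and $\Mcal_{\SO(2m)}$ has two components $\Mcal^\pm_{\SO(2m)}$ labeled by $w_2$, a continuity/monodromy argument in $\vec b$ matches them bijectively, so each fiber of $h_{\SO(2m)}^\pm$ is identified with $P(S,\overline S)$.

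The main obstacle is the behavior over $Z(p_m)$: $S'$ is singular there, so one must verify both that $U$ extends to a genuine line bundle on the normalization $S$ and that the pairing \eqref{eqn:even-orthogonal-pairing}, a priori defined only off the branch divisor, extends to a nondegenerate symmetric form on $V$ across that divisor. This local analysis at the nodes is precisely where the \'etaleness of $p$ (and the resulting sign $\hat\sigma^2=+1$) plays its essential role, distinguishing this case from both the symplectic and odd orthogonal settings.
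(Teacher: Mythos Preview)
The paper does not actually prove this theorem: it is stated as a result of Hitchin and attributed directly to \cite{Hitchin:87b}, with no proof given in the text. So there is nothing in the paper to compare your proposal against.

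That said, your sketch is a faithful outline of Hitchin's argument and correctly isolates the two structural differences from the symplectic case (passage to the normalization $S$ of the nodal curve $S'$, and the \'etaleness of $p:S\to\overline S$ forcing $\hat\sigma^2=+1$ and hence a symmetric pairing). Your identification of the ``main obstacle''---the local analysis at the nodes $Z(p_m)$ needed to extend both $U$ and the pairing---is exactly the nontrivial content, and you are right that it is not handled by the generic residue argument alone. The one part that is thin is the matching of the two components of $K(S,\overline S)$ with $w_2=\pm1$: a ``continuity/monodromy argument in $\vec b$'' is not quite enough on its own, since one must actually compute $w_2$ (or equivalently the parity of a theta characteristic on $S$) on each component to pin down the bijection. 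But as a summary of a result the paper merely quotes, your proposal is appropriate.
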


The moduli space of $\PSO(2m)$-Higgs bundles has four connected components. Let $\mathcal{M}_{\PSO(2m)}^0$ denote the neutral component 
consisting of those bundles which  lift to $\Spin(2m)$-bundles. 
As an easy corollary of Theorem \ref{thm:even-orthogonal} we get a description of the spectral data:
\begin{corollary}
	\label{cor:spectralpso}The generic fibers of the Hitchin map $h^0_{\PSO(2m)}:\mathcal{M}_{\PSO(2m)}^0\rightarrow \mathcal{A}(m)$ are in one-to-one correspondence with elements of the abelian variety $P(S,\overline{S})/J_2(X)$.
\end{corollary}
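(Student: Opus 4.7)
The plan is to descend the $\SO(2m)$ result of Theorem \ref{thm:even-orthogonal} along the central extension $\SO(2m)\to \PSO(2m)$, in direct analogy with Corollary \ref{cor:spectralpsp}. First I would observe that $\Mcal_{\PSO(2m)}^0$, being the component whose objects lift to $\Spin(2m)$, coincides with the image of $\Mcal^{+}_{\SO(2m)}$ under the projection $\SO(2m)\to\PSO(2m)$. Two orthogonal Higgs bundles $(V,\Phi)$ and $(V',\Phi')$ map to isomorphic $\PSO(2m)$-Higgs bundles iff $V'\simeq V\otimes \eta$ with $\eta\in J_2(X)$ (the pairing stays $\Ocal_X$-valued precisely because $\eta^{\otimes 2}\simeq\Ocal_X$, and the Higgs field is unchanged since it lies in the adjoint bundle). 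Hence over a generic $\vec b\in\Bcal(N)$, the fiber of $h^0_{\PSO(2m)}$ is the $J_2(X)$-quotient of the corresponding fiber of $h^{+}_{\SO(2m)}$.

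Next, I would translate this $J_2(X)$-action to the spectral side. Theorem \ref{thm:even-orthogonal} identifies $L\in P(S,\overline S)$ with $V=\pi_\ast U$, where $U=L\otimes(K_S\otimes\pi^\ast K_X^{-1})^{-1/2}$. The projection formula gives
\[
V\otimes\eta\;=\;\pi_\ast U\otimes\eta\;\simeq\;\pi_\ast\bigl(U\otimes\pi^\ast\eta\bigr),
\]
so, since the twist $(K_S\otimes\pi^\ast K_X^{-1})^{-1/2}$ is fixed, the operation $V\mapsto V\otimes\eta$ corresponds to $L\mapsto L\otimes\pi^\ast\eta$. Thus the translated action on the spectral side is pullback by $\pi$ from $J_2(X)$ into $\Pic(S)$.

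The remaining verification is that $\pi^\ast J_2(X)$ lands inside $P(S,\overline S)$. Factoring $\pi=\overline\pi\circ p$, for $\eta\in J_2(X)$ set $N=\overline\pi^\ast\eta\in J_2(\overline S)$, so $\pi^\ast\eta=p^\ast N$. The identity $p^\ast\nm_{S/\overline S}=1+\sigma^\ast$ together with $\sigma^\ast p^\ast=p^\ast$ yields $\nm_{S/\overline S}(p^\ast N)=N^{\otimes 2}=\Ocal_{\overline S}$, so $\pi^\ast\eta\in K(S,\overline S)$. To see it lies in the identity component $P(S,\overline S)$, note that $p^\ast: J(\overline S)\to J(S)$ is a morphism of abelian varieties and carries the connected group $J(\overline S)$ into the connected subgroup of $K(S,\overline S)$ containing $\Ocal_S$, which is precisely $P(S,\overline S)$. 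In particular it carries the finite subgroup $\overline\pi^\ast J_2(X)\subset J_2(\overline S)$ into $P(S,\overline S)$.

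Combining, a generic fiber of $h^0_{\PSO(2m)}$ is identified with $P(S,\overline S)/\pi^\ast J_2(X)$, and writing the quotient simply as $P(S,\overline S)/J_2(X)$ (with $J_2(X)$ acting via $\pi^\ast$) gives the statement. The main obstacle is the component check in the last paragraph: identifying the image of $\pi^\ast$ with a subgroup of the identity component, rather than spilling into the other component of $K(S,\overline S)$; factoring through $p^\ast$ and using continuity is what cleanly settles this point.
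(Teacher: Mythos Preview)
Your overall approach is correct and is exactly the kind of descent argument the paper has in mind (the paper states the corollary without proof, as an immediate consequence of Theorem \ref{thm:even-orthogonal}, in parallel with Corollary \ref{cor:spectralpsp}). The reduction to a $J_2(X)$-quotient and the use of the projection formula to identify $V\mapsto V\otimes\eta$ with $L\mapsto L\otimes\pi^\ast\eta$ are both correct.

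However, the final ``component check'' contains a genuine error. You assert that $p^\ast:J(\overline S)\to J(S)$ carries $J(\overline S)$ into $K(S,\overline S)$, but this is false: for $N\in J(\overline S)$ one has $\nm_{S/\overline S}(p^\ast N)=N^{2}$, which is trivial only for $N\in J_2(\overline S)$. So $p^\ast J(\overline S)$ is \emph{not} contained in $K(S,\overline S)$, and the connectivity argument as written does not go through. This is precisely the obstacle you yourself flag, and your proposed resolution does not work.

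There are two clean fixes. First, you can bypass the spectral check entirely: since the cup-square $H^1(X,\ZBbb/2)\to H^2(X,\ZBbb/2)$ vanishes on a curve, tensoring by $\eta\in J_2(X)$ preserves $w_2(V)$; hence $V\otimes\eta$ stays in the same fiber $(h^{+}_{\SO(2m)})^{-1}(\vec b)\simeq P(S,\overline S)$, and so $L\otimes\pi^\ast\eta\in P(S,\overline S)$ automatically. Second, and this is what the paper does later (proof of Lemma \ref{lem:connected} together with Remark \ref{rem:fact}), one shows via the Weil pairing and Lemma \ref{lem:L} that $\overline\pi^\ast J_2(X)\subset H_1$, the annihilator of $\Lcal$; then Mumford's identification $p^\ast H_1=P_2(S,\overline S)$ gives $\pi^\ast J_2(X)\subset P(S,\overline S)$.
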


\section{Spin structures from spectral data}

\subsection{Case of special spectral data}

\begin{lemma}
Fix generic $\vec b\in \Bcal(N)$.
Consider the orthogonal bundles $V_c$:
\begin{itemize}
\item for $N=2m+1$:
$$
V_c=K_X^{-m}\oplus K_X^{-m+1}\oplus \cdots\oplus \Ocal_X\oplus\cdots\oplus K_X^{m-1}\oplus K^m\ ,
$$
with orthogonal structure given by the pairing of $K^{-j}$ with $K^j$, and $\Ocal_X$ an orthogonal subbundle;
\item  for $N=2m$: 
$$
V_c=K_X^{-m}\oplus K_X^{-m+1}\oplus \cdots\oplus \Ocal_X\oplus\cdots\oplus K_X^{m-1}\oplus K^m\oplus \Ocal_X\ ,
$$
where the last factor  $\Ocal_X$ is also an orthogonal subbundle. 
\end{itemize}
Then $V_c$ admits a Higgs field $\Phi_c$ such that $(V_c,\Phi_c)$ is a stable $\SO(N)$-Higgs bundle in the fiber over $\vec b$.
\end{lemma}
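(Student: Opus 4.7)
The plan is to construct $\Phi_c$ explicitly as the Hitchin-section-type Higgs field for $\SO(N)$: a base ``shift'' nilpotent Higgs field plus corrections whose coefficients encode the spectral invariants $\vec b$. The orthogonal $V_c$ of the lemma is precisely the direct sum decomposition of the standard $\SO(N)$-representation under the weights of a principal $\mathfrak{sl}_2 \subset \mathfrak{so}(N,\CBbb)$ (in the odd case the representation is irreducible, in the even case it decomposes as $V_{2m-2}\oplus V_0$, accounting for the two $\Ocal_X$ summands), with the $K_X$-twists corresponding to the weights.

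First I would define a base Higgs field $\Phi_0: V_c \to V_c \otimes K_X$ supported on the sub-diagonal of the block decomposition given in the lemma, each block entry being the identification of $K_X^i$ with $K_X^{(i-1)+1}$, up to signs chosen so that $\Phi_0$ is skew-symmetric with respect to the pairing which matches $K_X^{-i}$ with $K_X^i$ and treats each $\Ocal_X$ as an orthogonal summand. In Lie-theoretic language, $\Phi_0$ is the image of the principal nilpotent $e\in\mathfrak{so}(N,\CBbb)$ under the $K_X$-twisted embedding $\mathfrak{so}(N,\CBbb)\hookrightarrow H^0(X,\mathfrak{so}(V_c)\otimes K_X)$. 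Next, for each $b_{2i}\in H^0(X,K_X^{2i})$ (and $p_m\in H^0(X,K_X^m)$ in the even case), I would place the section in the unique off-diagonal block position whose $K_X$-grading matches its target line bundle, enforcing skew-symmetry by adjoining the negative reflected entry across the anti-diagonal. Setting $\Phi_c=\Phi_0+\sum_i b_{2i}\,\nu_i\,(+p_m\,\nu_0)$, a direct computation of the characteristic polynomial via row reduction along the subdiagonal produces exactly $\lambda^{N}+\sum_i b_{2i}\lambda^{N-2i}$ in the odd case, and the analogous expression with the degree-$m$ ``Pfaffian'' coefficient $p_m$ in the even case.

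Finally, stability follows from the genericity of $\vec b$. The spectral curve $S$ defined by $\vec b$ is irreducible (smooth for $N$ odd by Bertini, with only nodes over $Z(p_m)$ for $N$ even, by the hypotheses of Section~\ref{sec:spectralevenorthogonal}). A $\Phi_c$-invariant isotropic subbundle $W\subset V_c$ would produce a nontrivial factor of the characteristic polynomial of $\Phi_c$ over the function field of $X$, contradicting the irreducibility of $S$; by the orthogonal analogue of Ramanan's criterion, this is enough to conclude stability of $(V_c,\Phi_c)$.

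The main obstacle is Step~2, namely verifying simultaneously that the off-diagonal insertions of $b_{2i}$ (and $p_m$) are compatible with the orthogonal condition and produce exactly the desired characteristic polynomial. The cleanest justification is to invoke Kostant's theorem: the Slodowy slice $e+\ker(\operatorname{ad} f)\subset\mathfrak{so}(N,\CBbb)$ meets every adjoint orbit in a single point and maps isomorphically via the characteristic polynomial onto the space of invariants. Twisting by the $K_X$-grading, this gives a canonical bijection between sections of the twisted slice and elements of $\Bcal(N)$, which produces $\Phi_c$ without any case-by-case matrix computation. The even case needs the extra care that the Pfaffian-like invariant of $\mathfrak{so}(2m)$ (rather than an $\operatorname{ad}$-invariant polynomial of degree $2m$) is what is encoded by $p_m$, which is standard.
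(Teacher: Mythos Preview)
Your proposal is correct and takes essentially the same approach as the paper: both invoke Hitchin's section construction via the principal $\slfrak(2)$-embedding, with the paper simply citing \cite{Hitchin:92} and \cite{Aparicio} for the details you spell out (Kostant/Slodowy slice plus $K_X$-twisting). Your explicit stability argument via irreducibility of the spectral curve is a standard ingredient of that construction and is implicit in the references the paper cites.
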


\begin{proof}
This follows from Hitchin's construction via the principal $\slfrak(2)$-embedding for split real forms (see \cite{Hitchin:92} and also \cite[Sections 8.4-5]{Aparicio}). 
\end{proof}

By  Theorems \ref{thm:odd-orthogonal} and \ref{thm:even-orthogonal}, there is a line bundle $L_c\in P(S,\overline S)$  such that $L_c$ gives spectral data for $(V_c,\Phi_c)$.

\begin{lemma} \label{lem:canonical-spin}
Recall that we have fixed a theta characteristic $K_X^{1/2}$. Then $K_X^{1/2}$ also  determines  a lift of $(V_c,\Phi_c)$ to $\Spin(N)$-Higgs bundles. 
\end{lemma}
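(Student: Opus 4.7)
My plan is to produce the spin lift via the principal $\slfrak(2,\CBbb)$-embedding underlying the construction of $(V_c,\Phi_c)$. Recall that in the cited construction of Hitchin, $V_c$ is realized as the orthogonal bundle associated, through the standard representation of the principal embedding $\rho:\SL(2,\CBbb)\to \SO(N)$, to the $\SL(2,\CBbb)$-bundle $K_X^{-1/2}\oplus K_X^{1/2}$ built from the theta characteristic, while $\Phi_c$ comes from the image of the principal nilpotent $e_+\in \slfrak(2,\CBbb)$ tensored with the identification of $\Hom(K_X^{-1/2},K_X^{1/2})$ with $K_X$.

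First, I would lift $\rho$ at the group level: because $\SL(2,\CBbb)$ is simply connected, $\rho$ factors uniquely through a homomorphism $\tilde\rho:\SL(2,\CBbb)\to \Spin(N)$. Composing the $\SL(2,\CBbb)$-bundle $K_X^{-1/2}\oplus K_X^{1/2}$ with $\tilde\rho$ yields a principal $\Spin(N)$-bundle whose projection to $\SO(N)$ recovers $V_c$ by construction. The Higgs field is automatically inherited: under the canonical Lie algebra isomorphism $\mathfrak{spin}(N)\isorightarrow \sofrak(N)$ one has $d\tilde\rho(e_+)=d\rho(e_+)$, so $\Phi_c$ lies in $\mathfrak{spin}(V_c)\otimes K_X$ and defines a Higgs field for the lift. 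Moreover, the Clifford norm of the lift is automatically trivial, since by definition $\Spin(N)=\Nm^{-1}(1)$, so any bundle obtained by extension of structure group through a map into $\Spin(N)$ has trivial Clifford norm; hence the lift lands in $\Mcal^+_{\Spin(N)}$.

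The main point requiring care is verifying that the lift depends only on the chosen theta characteristic, and cross-checking it against a more elementary description. Namely, one observes that $V_c\simeq W\oplus W^\vee$, possibly plus a trivial orthogonal summand in the even case, where $W$ is the direct sum of the strictly negative powers of $K_X$ appearing in the principal $\slfrak(2)$-decomposition of $V_c$. The inclusion $\GL(W)\hookrightarrow \SO(V_c)$ admits a canonical double cover by the metalinear group $\widetilde{\GL}(W)=\{(g,\lambda)\mid \lambda^2=\det g\}$ fitting into a pullback square with $\Spin(N)\to \SO(N)$, so a $\Spin(N)$-lift is equivalent to a choice of square root of the line bundle $\det W$. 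Since $\det W$ is itself an explicit negative power of $K_X$, the theta characteristic $K_X^{1/2}$ provides such a square root canonically, and a direct comparison with the embedding $\rho$ shows that this agrees with the lift produced via $\tilde\rho$. This cross-check simultaneously confirms that the construction depends only on the choice of $K_X^{1/2}$ and that the two natural descriptions coincide.
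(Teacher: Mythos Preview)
Your proposal is correct. The primary argument---lifting the principal embedding $\rho:\SL(2,\CBbb)\to\SO(N)$ to $\tilde\rho:\SL(2,\CBbb)\to\Spin(N)$ using simple connectivity, and then pushing the $\SL(2,\CBbb)$-bundle $K_X^{-1/2}\oplus K_X^{1/2}$ through $\tilde\rho$---is a clean and genuinely different route from the paper's. The paper argues directly via the isotropic decomposition: it writes $V_c=W^+\oplus W^-\oplus\Ocal_X$ in the odd case and $V_c=W^+\oplus W^-$ in the even case (absorbing an isotropic line of $\Ocal_X\oplus\Ocal_X$ into $W^+$), computes $\det W^+=K_X^{m(m+1)/2}$, and then invokes the standard fact that a square root of $\det W^+$ determines a $\Spin$ lift. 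Your cross-check is essentially this argument, though you have the parities reversed: the extra trivial orthogonal summand sits in the \emph{odd} case, not the even one. This slip does not affect your main argument.

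Your approach makes transparent why the lift is canonical from the Hitchin-section point of view and requires no separate check that $w_2(V_c)=0$; the paper's approach is more explicit about exactly which square root is taken and ties in directly with the general square-root-of-determinant formalism for spin lifts of orthogonal bundles with an isotropic splitting.
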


\begin{proof}
The bundles $V_c$ admit quasi-isotropic decompositions $W^{+}\oplus W^{-} \oplus\Ocal_X$ (odd case) and $W^{+}\oplus W^{-}$ (even case), where 
$\det W^+=K_X^{m(m+1)/2}$. 
Indeed, in the $N$ odd case we take $W^+=\oplus_{i=1}^m K^i$. For $N$ even, we add to this a choice of isotropic line in $\Ocal_X\oplus \Ocal_X$. It is well-known that a choice of square root of $\det W^+$ determines a lift to $\Spin$, and such a root is determined by $K_X^{1/2}$ (cf.\ \cite{Hitchin:16}).
\end{proof}

\subsection{Application of the Hecke transformation}\label{sec:hecke}
In Section \ref{sec:hecke-orthogonal} we described how an orthogonal bundle $V\to X$ with a choice of isotropic line $\tau$ at a point $p\in X$ gives rise to a new orthogonal bundle $V^\iota$.   In this section, we relate the spectral data of these orthogonal bundles under this transformation. We have the following.

\begin{proposition} \label{prop:hecke}
For $L\in K(S,\overline S)$, let $V_{L}=\pi_\ast U$,  $U=L\otimes (K_S\otimes \pi^\ast K_X^{-1})^{-1/2}$, be the orthogonal bundle associated to $L$ by Theorems \ref{thm:odd-orthogonal} and \ref{thm:even-orthogonal} for $N$ odd and even, respectively. Choose a point $p\in S$ outside the ramification locus,
 and let $\widetilde L=L\otimes \Ocal_S(p)\otimes \Ical_{\sigma(p)}$. The fiber of $U$ at $p$ corresponds to an isotropic line $\tau$ in the fiber of $V_{L}$ at $\pi(p)$. 
Then $V_{\widetilde L}$ is isomorphic to the orthogonal bundle $V_{L}^\iota$ in Definition \ref{def:iota}.
\end{proposition}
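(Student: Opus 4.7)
The plan is to realize each of the three intermediate sheaves $V_L^\flat$, $V_L^\sharp$, $V_L^\iota$ from Definition \ref{def:iota} as a pushforward of $U$ twisted by a divisor supported on $\{p,\sigma(p)\}$, and then to identify the result with $V_{\widetilde L}=\pi_\ast\widetilde U$ where $\widetilde U=L\otimes\Ocal_S(p)\otimes\Ical_{\sigma(p)}\otimes(K_S\otimes\pi^\ast K_X^{-1})^{-1/2}$. Because $p$ lies outside the ramification locus, $\pi$ is étale near $\pi(p)$ and $V_L|_{\pi(p)}$ splits canonically as $\bigoplus_{q\in\pi^{-1}(\pi(p))}U_q$, with $\tau=U_p$. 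The first step is to identify $\tau^\perp$: the trace formulas \eqref{eqn:odd-orthogonal-pairing} and \eqref{eqn:even-orthogonal-pairing} together with the fact that $\widehat\sigma$ carries $U_q$ into (a copy of) $U_{\sigma(q)}$ show that the only summand pairing nontrivially with $U_p$ is $U_{\sigma(p)}$. Hence $\tau^\perp=\bigoplus_{q\ne\sigma(p)}U_q$ and $\tau_1=V_L/\tau^\perp\cong U_{\sigma(p)}$, and applying $\pi_\ast$ to $0\to U\otimes\Ical_{\sigma(p)}\to U\to U|_{\sigma(p)}\to 0$ identifies $V_L^\flat=\pi_\ast(U\otimes\Ical_{\sigma(p)})$.

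Next, since $R^1\pi_\ast$ vanishes on the bundles in sight, pushing forward $0\to U\to U\otimes\Ocal_S(p)\to U(p)|_p\to 0$ yields a length-one quotient of $\pi_\ast(U\otimes\Ocal_S(p))$ over $V_L$ supported at $\pi(p)$. Combined with the self-duality $V_L\simeq V_L^\ast$ from the orthogonal form and relative duality for $\pi$ applied to $V_L^\flat$, this gives $V_L^\sharp=(V_L^\flat)^\ast=\pi_\ast(U\otimes\Ocal_S(p))$. With these two identifications in hand, $V_L^\iota$ is the kernel of $V_L^\sharp\to V_L/V_L^\flat=\tau_1\cong U_{\sigma(p)}$. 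Pushing forward $0\to U\otimes\Ocal_S(p)\otimes\Ical_{\sigma(p)}\to U\otimes\Ocal_S(p)\to U(p)|_{\sigma(p)}\to 0$, and using the canonical identification $U(p)|_{\sigma(p)}\simeq U_{\sigma(p)}$ (valid since $p\ne\sigma(p)$), exhibits this kernel as $\pi_\ast(U\otimes\Ocal_S(p-\sigma(p)))=V_{\widetilde L}$ as sheaves on $X$. The isotropic line $\tau^\iota$ in $V_L^\iota$ at $\pi(p)$ then corresponds under this identification to the fiber of $\widetilde U$ at $p$, as required.

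It remains to match the two orthogonal structures carried by this common underlying sheaf: the one on $V_L^\iota$ inherited from \eqref{eqn:q-sharp}, and the trace pairing on $V_{\widetilde L}$. Note first that $\widetilde L$ again lies in $K(S,\overline S)$ because $\mathrm{Nm}_{S/\overline S}\Ocal_S(p-\sigma(p))\simeq\Ocal_{\overline S}$, and the $\sigma$-linearization $\widehat\sigma$ on $U$ extends canonically to one on $\widetilde U$ via the identification $\sigma^\ast\Ocal_S(p-\sigma(p))=\Ocal_S(-(p-\sigma(p)))$, so the trace pairing on $V_{\widetilde L}$ is well-defined and nondegenerate. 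Away from $\pi(p)$ both pairings are given by the same trace formula and so agree on the complement of a point. The delicate step, which I expect to be the main obstacle, is the local matching at $\pi(p)$: one must check that the $\Ocal_X$-valued trace pairing on $\pi_\ast\widetilde U$ coincides with the restriction to $V_L^\iota\subset V_L^\sharp$ of the $\Ocal_X(p)$-valued pairing of \eqref{eqn:q-sharp}. I would do this in a local étale splitting around $\pi(p)$, comparing the two pairings summand by summand on the fibers $U_q$ for $q\in\pi^{-1}(\pi(p))$; nondegeneracy of the trace pairing combined with the agreement off $\pi(p)$ then forces them to coincide, with the compatibility of the trivializations of $\det V_L^\iota$ and $\det V_{\widetilde L}$ (both induced from the trivialization of $\det V_L$ via the sequences \eqref{eqn:E-flat} and \eqref{eqn:E-iota}) ruling out a global sign.
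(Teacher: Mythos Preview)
Your argument is correct and follows essentially the same route as the paper: identify $\tau_1$ with $U_{\sigma(p)}$ via the trace pairing, realize $V_L^\flat$, $V_L^\sharp$, and $V_L^\iota$ as pushforwards of $U\otimes\Ical_{\sigma(p)}$, $U\otimes\Ocal_S(p)$, and $U\otimes\Ocal_S(p)\otimes\Ical_{\sigma(p)}$ respectively. You are in fact more careful than the paper, which stops at the sheaf identification and does not explicitly verify that the orthogonal structures match; your final paragraph addresses this point, and the ``agreement on a dense open plus nondegeneracy'' argument is sound.
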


\begin{proof}
  Let $\tau_1=V_{L}/\tau^\perp$. It is a skyscraper sheaf supported at $\pi(p)$ of length $1$. Denote the orthogonal structure on $V_L$ by $(\, ,\, )$. 
By definition of the pairing \eqref{eqn:odd-orthogonal-pairing} and \eqref{eqn:even-orthogonal-pairing}, 
$\tau_1$ can be identified with the fiber of $U$ at $\sigma(p)$. Under the direct image,
the sheaf map $V_{L}\to \tau_1$ given by $e\mapsto (e, \tau)$ corresponds
  to evaluation $U\to U_{\sigma(p)}$. In other words, the direct image of the exact sequence
$$
0\lra U\otimes \Ical_{\sigma(p)}\lra U\lra U_{\sigma(p)}\lra 0
$$
is 
$$
0\lra V_{L}^\flat \lra V_{L}\lra \tau_1\lra 0\ .
$$
With respect to the orthogonal structure,  $V_{L}^\sharp=( V_{L}^\flat )^\ast $ is the direct image of $U\otimes\Ocal_S(p)$. Now 
 the direct image of the exact sequence
$$
0\lra U\lra U\otimes\Ocal_S(p)\lra U_{p}\lra 0
$$
is 
$$
0\lra V_{L} \lra V_{L}^\sharp\lra \tau_2\lra 0\ .
$$
By definition, $V_{L}^\iota$ is the kernel of the induced map $V_L^\sharp\to V_{L}/V_{L}^\flat\simeq \tau_1$, and it follows that
$V_{L}^\iota=\pi_\ast\left(U\otimes\Ocal_S(p)\otimes \Ical_{\sigma(p)}\right)$.
\end{proof}

\begin{corollary} \label{cor:hecke}
Let $L\in K(S,\overline S)$, $L=L_c\otimes M\otimes \sigma^\ast(M^\ast)$ for $M\in J(S)$. Then for any generic choice of divisor $\Div M$, $V_L$ is isomorphic to the Hecke transform $V_c^\iota$ at $\pi(\Div M)$. 
\end{corollary}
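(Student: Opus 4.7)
The plan is to reduce the statement to an iterated application of Proposition \ref{prop:hecke}. Writing $\Div M$ as a sum of generic points, each successive elementary modification of the associated line bundle on $S$ by a factor $\Ocal_S(p)\otimes \Ical_{\sigma(p)}$ will, by the proposition, correspond to a single Hecke transform of the orthogonal bundle at $\pi(p)$. Composing these transformations recovers $V_L$ from $V_c$.

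First, I would rewrite the correction factor $M\otimes \sigma^\ast(M^\ast)$ as a product of elementary factors. If $\Div M=D=\sum_i n_i p_i$ then $M\otimes\sigma^\ast(M^\ast)\simeq \Ocal_S(D-\sigma^\ast D)$. Since $\Ocal_S(\sigma(p)-p)$ is of the same form as $\Ocal_S(q-\sigma(q))$ with $q=\sigma(p)$, by replacing the points with negative coefficients by their $\sigma$-images I may assume $D$ is effective, so $D=p_1+\cdots +p_k$. By the genericity hypothesis on $\Div M$ I may further assume that the $p_i$ are distinct, lie outside the ramification locus of $\pi$, and that the points $\pi(p_i)\in X$ are distinct.

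Next I would run an induction on $k$. Set $M_i=\Ocal_S(p_1+\cdots+p_i)$ (with $M_0=\Ocal_S$, $M_k=M$) and
$$L_i=L_c\otimes M_i\otimes \sigma^\ast(M_i^\ast),\qquad U_i=L_i\otimes (K_S\otimes \pi^\ast K_X^{-1})^{-1/2},$$
so $L_0=L_c$, $L_k=L$, and
$$L_{i+1}=L_i\otimes \Ocal_S(p_{i+1})\otimes \Ical_{\sigma(p_{i+1})}.$$
Since $L_i\in K(S,\overline S)$ for each $i$ (the factors $\Ocal_S(p_j-\sigma(p_j))$ are visibly in the kernel of $\nm_{S/\overline S}$), Theorems \ref{thm:odd-orthogonal} and \ref{thm:even-orthogonal} produce an orthogonal bundle $V_{L_i}=\pi_\ast U_i$. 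Applying Proposition \ref{prop:hecke} to $L_i$ with the point $p_{i+1}$ yields
$$V_{L_{i+1}}\simeq V_{L_i}^\iota,$$
where the Hecke transform is taken at $\pi(p_{i+1})$ with respect to the isotropic line $\tau_{i+1}\subset (V_{L_i})_{\pi(p_{i+1})}$ given by the fiber of $U_i$ at $p_{i+1}$. Composing for $i=0,\ldots,k-1$ gives $V_L=V_{L_k}\simeq V_c^\iota$ at $\pi(\Div M)=\pi(p_1)+\cdots+\pi(p_k)$, as claimed.

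The main obstacle is checking that Proposition \ref{prop:hecke} is indeed applicable at each step of the induction: one needs the point $p_{i+1}$ to remain outside the ramification locus of $\pi$ (true by genericity, since the ramification locus is a proper subvariety of $S$), the fiber of $U_i$ at $p_{i+1}$ to be a genuine isotropic line of $V_{L_i}$ (which follows from the pairings \eqref{eqn:odd-orthogonal-pairing} and \eqref{eqn:even-orthogonal-pairing}, since $p_{i+1}\ne\sigma(p_{i+1})$), and the successive points $\pi(p_j)$ to be distinct (ensured by the generic choice of $\Div M$, so that the intermediate $V_{L_i}$ are well-defined semistable orthogonal bundles in the smooth locus over $\vec b$). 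Once these genericity assertions are verified, the induction closes without further computation.
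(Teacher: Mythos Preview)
Your proof is correct and follows essentially the same approach as the paper's: decompose $M\otimes\sigma^\ast(M^\ast)$ into a product of elementary factors $\Ocal_S(p)\otimes\Ical_{\sigma(p)}$ and then apply Proposition \ref{prop:hecke} repeatedly. The paper keeps the negative points $-q_i$ in $\Div M$ and instead records the corresponding elementary factor as $\Ocal_S(\sigma(q_i))\otimes\Ical_{q_i}$, whereas you absorb the sign by passing to $\sigma(q_i)$ at the outset; since $\pi\circ\sigma=\pi$, both lead to Hecke transforms at the same points of $X$, and for orthogonal (as opposed to Clifford) bundles the sign is irrelevant. Your added remarks on genericity and the inductive bookkeeping are more explicit than the paper's terse ``apply Proposition \ref{prop:hecke} repeatedly,'' but the argument is the same.
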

\begin{proof}
Write the divisor $D$ of $M$ as:
$$
D=p_1+\cdots + p_r -q_1-\cdots -q_s\ .
$$
Then 
$$
-\sigma(D)=-\sigma(p_1)-\cdots - 
\sigma(p_r) +\sigma(q_1)+\cdots +\sigma(q_s)
$$
is a divisor of $\sigma^\ast(M^\ast)$, and so $M\otimes \sigma^\ast(M^\ast)$ has divisor
$$
\bigotimes_{i=1}^r\left(\Ocal_S(p_i)\otimes \Ical_{\sigma(p_i)}\right)\otimes 
\bigotimes_{i=1}^s\left(\Ocal_S(\sigma(q_i))\otimes \Ical_{q_i}\right)\ .
$$
Now apply Proposition \ref{prop:hecke} repeatedly.
\end{proof}

Using the results of Section \ref{sec:hecke-spin} (see Proposition \ref{prop:hecke} and recall the convention in Remark \ref{rmk:sign}), we also have

\begin{corollary} \label{cor:spin-hecke}
 In addition to the hypothesis of Proposition \ref{prop:hecke}, suppose that $V_L$ has a lift to an $\SC$-bundle $P_L$. Then
this determines a lift $P_{\widetilde L}$ of $V_{\widetilde L}$ to an $\SC$-bundle with
$$
\Nm(P_{\widetilde L})=\Nm(P_{L})\otimes \Ocal_X(p)\ .
$$
In particular, if $L=L_c\otimes M\otimes \sigma^\ast(M^\ast)$, $M\in J(S)$,
 a choice of spin structure on $V_{L_c}$ determines a lift of $V_L$ to an $\SC$-bundle $P_M$ with 
$\Nm(P_M)=\nm_{ S/X}(M)$. 
\end{corollary}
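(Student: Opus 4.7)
The plan is to reduce both statements to applications of the transition-function construction of Section \ref{sec:hecke-spin}, viewing the passage from $L$ to $\widetilde L$ (respectively from $L_c$ to $L$) as a sequence of Hecke transformations on the underlying orthogonal bundle and lifting each step.

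First I would handle the main assertion. By Proposition \ref{prop:hecke}, the orthogonal bundle $V_{\widetilde L}$ is canonically isomorphic to the $\iota$-transform $V_L^\iota$ at the point $\pi(p)\in X$, the isotropic line being the fiber of $U$ at $p$. Applying the explicit transition-function recipe of Section \ref{sec:hecke-spin} to the given lift $P_L$ produces a Clifford bundle $P_L^\iota$ whose associated orthogonal bundle is $V_L^\iota$; setting $P_{\widetilde L}:=P_L^\iota$ gives the desired lift of $V_{\widetilde L}$. The Clifford-norm computation immediately preceding Remark \ref{rmk:sign} shows $\Nm(P_L^\iota)=\Nm(P_L)\otimes\Ocal_X(\pi(p))$, which, under the abuse identifying $p\in S$ with $\pi(p)\in X$, is the stated identity.

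For the second assertion I would iterate. Choose a generic representative $\Div M=\sum_{i}p_i-\sum_{j}q_j$ (points distinct, away from the ramification locus). Then as in the proof of Corollary \ref{cor:hecke},
\[
M\otimes \sigma^*(M^*)\;\simeq\;\bigotimes_{i}\bigl(\Ocal_S(p_i)\otimes \Ical_{\sigma(p_i)}\bigr)\otimes\bigotimes_{j}\bigl(\Ocal_S(\sigma(q_j))\otimes\Ical_{q_j}\bigr),
\]
so $V_L$ is obtained from $V_{L_c}$ by successive $\iota$-transforms at the points $\pi(p_i)$ and $\pi(\sigma(q_j))=\pi(q_j)$. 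Starting from the spin lift of $V_{L_c}$ furnished by Lemma \ref{lem:canonical-spin} (so $\Nm(P_{L_c})=\Ocal_X$), I apply the first part once per factor, using the sign convention of Remark \ref{rmk:sign}: at the points $\pi(p_i)$ I take the prefactor $\exp(\pi i\xi)$, while at the points $\pi(q_j)$ I take $\exp(-\pi i\xi)$, so that the Clifford norm changes by $\Ocal_X(\pi(p_i))$ and $\Ical_{\pi(q_j)}$ respectively. Multiplying the contributions,
\[
\Nm(P_M)\;=\;\Ocal_X\Bigl(\sum_i \pi(p_i)-\sum_j \pi(q_j)\Bigr)\;=\;\Ocal_X\bigl(\pi_*\Div M\bigr)\;=\;\nm_{S/X}(M),
\]
which is the claimed formula.

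The step that deserves the most care is showing that the resulting $P_M$ depends only on the isomorphism class of $M$ and not on the generic representative $\Div M$ chosen. Distinct Hecke transformations at distinct points commute up to canonical isomorphism, so the ordering is harmless; what must be checked is that changing $\Div M$ by the divisor of a rational function $f$ on $S$ produces an isomorphic Clifford bundle. This follows because the corresponding change in transition data is gauge-equivalent via the section of $\SC(N)$ obtained by combining the scalars $\exp(\pm\pi i\xi)$ into the global meromorphic function $f$ pushed forward through $\pi$, whose Clifford-norm contribution $\nm_{S/X}(f)$ is itself the divisor of a rational function on $X$ and hence trivial in $\Pic(X)$. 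This compatibility check is the only nonformal point; the rest is a bookkeeping exercise in the sign convention of Remark \ref{rmk:sign}.
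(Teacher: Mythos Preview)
Your argument is correct and is exactly the approach the paper intends: it states the corollary as an immediate consequence of Section \ref{sec:hecke-spin}, Proposition \ref{prop:hecke}, and the sign convention of Remark \ref{rmk:sign}, without writing out the details you have supplied. The one point where the paper is slightly more explicit than your last paragraph is the divisor-independence check: rather than arguing abstractly that the Clifford norms differ by a principal divisor, the paper (in the remark immediately following the corollary) observes that multiplication of sections of $U$ by $f/\sigma^\ast(f)$ is an isometry for the pairing \eqref{eqn:odd-orthogonal-pairing} or \eqref{eqn:even-orthogonal-pairing}, and that in a suitable local frame this isometry has the form $\zeta$ of \eqref{eqn:zeta}, hence lifts to an isomorphism of the $\SC$-bundles themselves.
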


\begin{remark}
Implicit in Corollary \ref{cor:hecke} is the following: if we modify the choice of divisor $D$ of $M$ by a generic meromorphic function $f$ then there is a natural 
isomorphism of the orthogonal bundles obtained by Hecke transformations on $\pi(D)$ and $\pi(D+\Div(f))$.
Indeed, multiplication of sections of $U$ (in the proof of Proposition \ref{prop:hecke}) by $f/\sigma^\ast(f)$, is an isometry with respect to the pairing \eqref{eqn:odd-orthogonal-pairing} or \eqref{eqn:even-orthogonal-pairing}. Furthermore, 
in an appropriate local frame this isometry has the form of $\zeta$ in \eqref{eqn:zeta}, and so the
 $\SC$-bundles obtained in Corollary \ref{cor:spin-hecke} are similarly isomorphic.
\end{remark}

\subsection{Proof of Theorem \ref{thm:main}} \label{sec:proof}
Let $P$ be a spin Higgs bundle in the fiber over $\vec b$. According to Theorems \ref{thm:odd-orthogonal} and \ref{thm:even-orthogonal}, 
the underlying orthogonal bundle to $P$ is of the form
$V_L$ for spectral data $L\in P(S,\overline S)$. 
Recall from Lemma \ref{lem:canonical-spin} that a choice of theta characteristic determines a  spin structure on $V_c$.
By Corollary \ref{cor:spin-hecke}, if we write $L=L_c\otimes M^2$
for $M\in K(S,\overline S)$, then there is a lift of $V_L$ to a spin bundle $P_M$. We must check the dependence of this lift on the choice of $M$.
By Theorems \ref{thm:odd-orthogonal} and \ref{thm:even-orthogonal}, the ambiguity in the choice of $M$ is the action of $J_2(\overline S)$.  So
consider $M\otimes p^\ast N$, for $N\in J_2(\overline S)$. Let $N(t)$ be a family in $J(\overline S)$, $N(0)=\Ocal_{\overline S}$ and $N(1)=N$, and set $M(t)=M\otimes p^\ast N(t)\in J(S)$.
Thus $L=L_c\otimes M(t)\otimes \sigma^\ast(M^\ast(t))$ for all $t$.
By Corollary \ref{cor:spin-hecke}, we obtain a family of lifts of $V_L$ to $\SC$-bundles $P_{M(t)}$ with 
\begin{equation} \label{eqn:Q}
\Nm\left(P_{M(t)}\right)=\nm_{S/X}(M)\otimes \nm_{S/X}(p^\ast N(t))
=\left(\nm_{\overline S/X} N(t)\right)^2\ .
\end{equation}
In \eqref{eqn:Q} we have used two facts: first, since $M\in \ker \nm_{S/\overline S}$, and $\nm_{S/X}=\nm_{\overline S/X}\circ\nm_{S/\overline S}$, we have $\nm_{S/X}(M)=\Ocal_X$; and second, 
$$
\nm_{S/X}\left(p^\ast N(t)\right)=
\nm_{\overline S/X}\left(\nm_{S/\overline S}p^\ast N(t)\right)=
\nm_{\overline S/X}\left(N^2(t)\right)=
\left(\nm_{\overline S/X}(N(t))\right)^2 \ .
$$
Now it follows from \eqref{eqn:Q} that
$$
\Nm\left(P_{M(t)}\otimes\left(\nm_{\overline S/X} N(t)\right)^\ast \right)=
\Ocal_X\ .
$$
But then $P_{M(t)}\otimes\left(\nm_{\overline S/X} N(t)\right)^\ast$
is a family of  spin bundles that lift the fixed orthogonal bundle $V_L$. The set of such lifts is finite, so the family  is necessarily constant. Evaluating at $t=0$ and $t=1$, we find:
$$
P_{M\otimes p^\ast N}\simeq P_M\otimes \nm_{\overline S/X} N\ .
$$
Given one lift of $V_L$ to a spin bundle, the others are obtained by tensoring by elements of $J_2(X)$. From the above, a change of $M$ is equivalent  to the action of $J_2(\overline S)$ on $J_2(X)$.
Since $\nm_{\overline S/X}: J_2(\overline S)\to J_2(X)$ is surjective (cf.\ the next section), $P=P_{M\otimes p^\ast N}$ for some choice of $N$.
The proof of the theorem for spin bundles thus follows. The proof in the twisted case follows similarly by applying the first part of Corollary \ref{cor:spin-hecke} and using the same argument as above.

\section{Duality} \label{sec:duality}

We continue with the same notation as in the previous sections.
We may regard $J_2(X)$ as a subgroup of $J_2(\overline S)$ by pullback $\pi^\ast$, and similarly $J_2(\overline S)$ as a subgroup of $J_2(X)$ by the norm map $\nm_{\overline S/X}$. These are dual operations. Using this fact,  and dualizing  the exact sequence
\begin{equation}
\label{eqn:torisionnormmap}
1\lra J_2(X)\stackrel{\pi^\ast}{\lra} J_2(\overline S)\lra J_2(\overline S)/J_2(X)\lra 1\ ,
\end{equation}
we see that 
$
\left[ J_2(\overline S)/J_2(X)\right]^\vee\subset J_2(\overline S)^{\vee}.
$
Since the pullback $\pi^*$ is injective it follows that $\nm_{\overline S/X}: J_2(\overline{S})\rightarrow J_2({X})$ is surjective. Recall that $K(S,\overline{S})=\ker\operatorname{Nm}_{S/\overline{S}}$, whereas the Prym $P(S,\overline S)$ is the connected component containing $\Ocal_S$. 

\begin{remark} \label{rem:fact}
In the case where $S\rightarrow \overline{S}$ is \'etale, let $H_0$ denote the kernel of the map $J(\overline S)\to J(S)$, and recall that $\Lcal\in J_2(\overline S)$ defines the cover. Then $H_0=\{\Ocal_{\overline S}, \Lcal\}$. Moreover, if $H_1\subset J_2(\overline S)$ denotes the annihilator of $\Lcal$ with respect to the Weil pairing, then pulling back from $\overline S$ to $S$ gives an identification of the two torsion points of the Prym  $P_2(S,\overline S)\simeq H_1/H_0$  (cf. \cite{Mumford:74}).
\end{remark}

Now consider the following variety:
\begin{align*}
K(S ,\overline{S})\times_{J_2(\overline{S})}&J_2(X):=\\
&\{(a,b)\in K(S ,\overline{S})\times J_2(X)\}/\{(a\cdot s,b\cdot \nm_{\overline S/X}(s))\sim (a,b)\mid s\in J_2(\overline{S})\}
\end{align*} Observe that $K(S,\overline{S})\times_{J_2(\overline{S})}J_2(X)$ can be realized as a group quotient of $K(S,\overline{S})\times J_2(X)$ and hence has a natural group law. 
Now we define a map $\iota$ by:
$$\iota:{P(S ,\overline S)}\lra K(S ,\overline{S})\times_{J_2(\overline{S})}J_2(X): 
a \mapsto [(a,1)]\ .
$$  



\begin{lemma}\label{lem:connected}
The map $\iota$ induces the following: 
\begin{enumerate}
	\item If $S\rightarrow \overline{S}$ is ramified, then $$
	\frac{P(S ,\overline S)}{\left[ J_2(\overline S)/J_2(X)\right]^\vee}\simeq K(S ,\overline{S})\times_{J_2(\overline{S})}J_2(X)\ .
	$$
	\item If $S\rightarrow \overline{S}$ is \'etale, then 
	$$\frac{P(S,\overline{S})}{\left[ (H_1/H_0)/J_2(X)\right]^\vee}\simeq  K(S ,\overline{S})\times_{J_2(\overline{S})}J_2(X)\ .$$
\end{enumerate}
In particular, in both cases, $K(S ,\overline{S})\times_{J_2(\overline{S})}J_2(X)$ is an abelian variety. 
\end{lemma}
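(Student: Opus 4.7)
The plan is to show that $\iota$ is a surjective homomorphism with the claimed kernel, and to conclude via the first isomorphism theorem (which applies equally well to morphisms of abelian varieties). That $\iota$ is a group homomorphism is immediate from the group law on the quotient. The principal computational tool throughout is functoriality of the Weil pairing,
\[
\langle \overline\pi^\ast c,\, s\rangle_{\overline S} \;=\; \langle c,\, \nm_{\overline S/X}(s)\rangle_X, \qquad c\in J_2(X),\ s\in J_2(\overline S),
\]
together with the surjectivity of $\nm_{\overline S/X}|_{J_2(\overline S)}$ recorded just before the lemma.

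For part (1), the ramified case, $p^\ast$ is injective on $J(\overline S)$ and $P(S,\overline S)=K(S,\overline S)$. Surjectivity is immediate: given $[(c,d)]$, choose $s\in J_2(\overline S)$ with $\nm(s)=d^{-1}$ and write $[(c,d)] = [(c\cdot p^\ast s,1)] = \iota(c\cdot p^\ast s)$. The kernel consists of $a = p^\ast s$ with $\nm(s) = 1$, which, by injectivity of $p^\ast$, identifies with $\ker(\nm_{\overline S/X}|_{J_2(\overline S)})$. The displayed identity exhibits this kernel as the annihilator of $\overline\pi^\ast J_2(X)$ in $J_2(\overline S)$, and dualizing \eqref{eqn:torisionnormmap} via the self-duality of $J_2(\overline S)$ under the Weil pairing identifies the annihilator with $[J_2(\overline S)/J_2(X)]^\vee$, as claimed.

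For part (2), the étale case, the same argument applies but now $p^\ast|_{J_2(\overline S)}$ has kernel $H_0$ and, by Remark \ref{rem:fact}, $p^\ast s\in P$ if and only if $s\in H_1$. Thus $\ker\iota\simeq (H_1\cap\ker\nm)/H_0$, and the Weil pairing descended to the Prym $2$-torsion $H_1/H_0\simeq P_2(S,\overline S)$, combined with the inclusion $\overline\pi^\ast J_2(X)\subseteq H_1$ supplied by Lemma \ref{lem:L}, identifies this kernel with $[(H_1/H_0)/J_2(X)]^\vee$. For surjectivity, given $[(c,d)]$, one must find $s$ with $\nm(s)=d^{-1}$ \emph{and} lying in the correct coset of $H_1$ so that $c\cdot p^\ast s\in P$. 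This is the main technical step: fixing any $s_0$ with $\nm(s_0) = d^{-1}$, we modify it by an element of $\ker(\nm|_{J_2(\overline S)}) = \overline\pi^\ast J_2(X)^\perp$ that pairs nontrivially with $\mathcal{L}$, which exists since $\mathcal{L}\notin\overline\pi^\ast J(X)$ generically. Once both isomorphisms are in hand, the right-hand sides are abelian varieties as quotients of the abelian variety $P(S,\overline S)$ by finite subgroups, completing the final assertion.
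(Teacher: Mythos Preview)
Your argument is correct and follows essentially the same strategy as the paper's proof: show $\iota$ is a surjective group homomorphism and identify the kernel via Weil-pairing duality. The organization differs slightly in the \'etale case---the paper splits into the cases $a\in P$ and $a\notin P$ and uses surjectivity of $\nm_{\overline S/X}:H_1/H_0\to J_2(X)$ directly, whereas you give a uniform component-switching argument by finding $t\in\ker\nm$ with $\langle t,\Lcal\rangle\neq 1$---but these are equivalent, since both reduce to the single fact $\Lcal\notin\overline\pi^\ast J_2(X)$.

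The one place your write-up should be tightened is the justification of that fact. You say it holds ``generically,'' but the paper proves it unconditionally (for the spectral curves under consideration) by observing that the composite $\pi^\ast:J(X)\to J(S)$ is injective (cf.\ \cite[Prop.\ 11.4.3]{BRHL}); since $p^\ast\Lcal=\Ocal_S$, if $\Lcal=\overline\pi^\ast M$ for some $M\in J_2(X)$ then $\pi^\ast M=\Ocal_S$, forcing $M=\Ocal_X$ and hence $\Lcal=\Ocal_{\overline S}$, a contradiction. You also use implicitly that $H_0\subset\ker\nm_{\overline S/X}$ (needed for the quotient $(H_1\cap\ker\nm)/H_0$ to make sense), which again is Lemma~\ref{lem:L}; it is worth stating this explicitly.
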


\begin{proof}
First, suppose $S \rightarrow \overline{S}$ is ramified. 
Let $[(a,b)]\in K(S ,\overline{S})\times_{J_2(\overline{S})}J_2(X)$. Since   $\operatorname{Nm}_{\overline{S}/X}$ is surjective we can rewrite any representative in the form $(a_1,1)$, where $a_1=as$ and $s$ is an element of $J_2(S)$ such that $\nm_{S/X}(s)=b$. In particular,   $\iota$ is surjective.
 On the other hand,  $\ker\iota$ is clearly given by the kernel of  $\nm_{\overline S/X}: J_2(\overline{S})^{\vee}\rightarrow J_2(X)^{\vee}$. This is precisely ${\left[ J_2(\overline S)/J_2(X)\right]^\vee}$ by the exact seuquence \eqref{eqn:torisionnormmap}.
 
 Now we consider the case where $S\rightarrow \overline{S}$ is \'etale. By \cite[Prop.\ 11.4.3]{BRHL}, the pullback $J(X)\rightarrow J(S)$ is injective. This implies that the line bundle $\mathcal{L}$ defining the \'etale cover is not in the image of $J_2(X)$ under pullback. 
We claim that the image of $J_2(X)$ lies in $H_1$. Indeed, with respect to the Weil pairing $\langle\, ,\, \rangle$, if $M\in J_2(X)$ then
$$
\langle \overline \pi^\ast M, \Lcal\rangle_{\overline S}
=\langle M, \nm_{\overline S/X}\Lcal\rangle_X=1\ ,
$$
since $\operatorname{Nm}_{\overline{S}/X}(\mathcal{L})=\mathcal{O}_{X}$
by Lemma \ref{lem:L}; hence, the claim.
It follows that $J_2(X)$ injects into $H_1/H_0$ and also that $H_1/H_0$ surjects to $J_2(X)$ under the map $\operatorname{Nm}_{\overline{S}/X}$. If $[(a,b)]\in K(S ,\overline{S})\times_{J_2(\overline{S})}J_2(X)$ is such that $a \in P(S,\overline{S})$, then we can find an element $s\in P_2(S,\overline S)$ such that $\operatorname{Nm}_{\overline{S}/X}(s)=b$. Then $[a,b]=[as,1]$, and clearly $as \in P(S,\overline{S})$. If, on the other hand,  $a\in K(S,\overline{S})\backslash P(S,\overline{S})$, then there is  $\zeta$ of $J_2(\overline{S})\backslash H_1$ such that $a\zeta\in P(S,\overline{S})$. 
By modifying $\zeta$ with elements in  $H_1$ and using the surjectivity of $\nm_{\overline S/X}:H_1/H_0\to J_2(X)$, we can furthermore arrange that $\nm_{\overline S/X}(\zeta)=\Ocal_X$.
Then we are done with the proof of surjectivity of $\iota$, since in this case $[a,b]=[a\zeta,b]$. Finally, $a\in \ker\iota$  implies that $[a,1]=[1,1]$ in $K(S,\overline{S})\times_{J_2(\overline{S})}J_2(X)$; hence, $a$ is two torsion and $\operatorname{Nm}_{\overline{S}/X}(a)=1$, and so by Remark \ref{rem:fact} the  kernel of $\iota$ is identified with  $((H_1/H_0)/J_2(X))^{\vee}$.
\end{proof}

\begin{lemma} \label{lem:duality}
We have the following isomorphism of abelian varieties.
$$
\left[P(S ,\overline S)/J_2(X)\right]^\vee\simeq K(S ,\overline{S})\times_{J_2(\overline{S})}J_2(X)\ .
$$
\end{lemma}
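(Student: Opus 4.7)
The plan is to combine Lemma~\ref{lem:connected} with the duality theory of finite quotients of abelian varieties and the Weil pairing on the Prym.

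First, Lemma~\ref{lem:connected} tells us that $K(S,\overline S)\times_{J_2(\overline S)} J_2(X)$ is isomorphic to $P/G$, where $P := P(S,\overline S)$ and $G$ is the explicit finite subgroup identified there: $G = [J_2(\overline S)/J_2(X)]^\vee$ in the ramified case, and $G = [(H_1/H_0)/J_2(X)]^\vee$ in the étale case. Hence the claim reduces to exhibiting a canonical isomorphism
$$(P/J_2(X))^\vee \simeq P/G.$$

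Next I would invoke the standard duality for finite quotients of a principally polarized abelian variety. The Prym $P$ carries a canonical principal polarization (see \cite{Mumford:74}), and hence a perfect Weil pairing $e_2 : P_2\times P_2 \to \mu_2$ on its 2-torsion. For any finite subgroup $K \subset P_2$, a quotient-dualization argument gives a canonical isomorphism $(P/K)^\vee \simeq P/K^\perp$, where $K^\perp$ is the annihilator of $K$ under $e_2$. It therefore suffices to identify $J_2(X)^\perp = G$ inside $P_2$.

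The latter identification is a consequence of the adjointness of pullback and norm under the Weil pairing,
$$\langle \overline\pi^\ast a,\, b\rangle_{\overline S} = \langle a,\, \nm_{\overline S/X}(b)\rangle_X$$
for $a \in J_2(X)$ and $b\in J_2(\overline S)$, together with the analogous adjointness for $p^\ast$ and $\nm_{S/\overline S}$. Unwinding the definitions, the annihilator of $\pi^\ast J_2(X)$ in $P_2$ is precisely the subgroup of 2-torsion classes whose image under $\nm_{\overline S/X}$ vanishes, which by the explicit description of $G$ in Lemma~\ref{lem:connected} is exactly $G$. In the étale case, the computation takes place modulo $H_0$, and the identification $P_2 \simeq H_1/H_0$ from Remark~\ref{rem:fact}, together with the fact (established in the proof of Lemma~\ref{lem:connected}) that $J_2(X)$ injects into $H_1/H_0$, ensures the Weil-pairing argument descends correctly.

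The principal obstacle is the bookkeeping between the two cases: the description of $P_2$ and the subgroup $G$ take different forms in the étale and ramified situations, and one must verify in each case that the annihilator condition under the Prym Weil pairing lines up with the explicit form of $G$ coming from Lemma~\ref{lem:connected}. A secondary point is checking that the polarization on $P$ used to define $e_2$ is the correct (principal) one, which for the étale case requires passing to half the restriction of the $J(S)$-polarization.
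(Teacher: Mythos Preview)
Your overall strategy---reduce via Lemma~\ref{lem:connected} and then compute the dual of $P/J_2(X)$---is the same as the paper's, and in the \'etale case your argument is essentially correct. But there is a genuine gap in the ramified case: the Prym $P(S,\overline S)$ of a \emph{ramified} double cover is \emph{not} principally polarized. The polarization induced from $J(S)$ has kernel exactly $p^\ast J_2(\overline S)$, not zero; equivalently, $P^\vee \simeq P/J_2(\overline S)$, which is the fact the paper invokes. Your formula $(P/K)^\vee \simeq P/K^\perp$ with $K^\perp$ taken inside $P_2$ under a perfect Weil pairing $e_2$ requires a principal polarization, and here it gives the wrong answer already on cardinalities: if $P$ were principally polarized, $K^\perp$ would have order $2^{2\dim P}/|J_2(X)|$, whereas the correct group $G=[J_2(\overline S)/J_2(X)]^\vee$ has order $2^{2g(\overline S)-2g}$, and $\dim P = g(S)-g(\overline S) > g(\overline S)$ in the ramified situation.

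The fix is exactly what the paper does: rather than a perfect $e_2$ on $P_2$, use the (degenerate) polarization pairing whose kernel is $p^\ast J_2(\overline S)$. Concretely, apply the isogeny duality sequence to $P/J_2(X)\to P/J_2(\overline S)$, identify $(P/J_2(\overline S))^\vee$ with $P$ via the polarization, and read off $(P/J_2(X))^\vee \simeq P/[J_2(\overline S)/J_2(X)]^\vee$. Your adjointness-of-norm-and-pullback idea is morally present here, but it lives on $p^\ast J_2(\overline S)$ (the polarization kernel), not on all of $P_2$. The reference to \cite{Mumford:74} for the principal polarization applies only to the unramified case, so your ``secondary point'' is in fact the primary obstacle.
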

\begin{proof}
	Let $f: A\rightarrow B$ be an isogeny of abelian varieties.
	 $A^{\vee}$ and $B^{\vee}$ be the corresponding dual abelian varities. 
	 Then there exists an isogeny of dual abelian varieties with the following exact sequence: 
	 \begin{equation}
	 \label{eqnabel}
	  1\rightarrow (\ker f)^{\vee}\rightarrow B^{\vee}  \rightarrow A^{\vee}\rightarrow 1\ ,
	 \end{equation}
	 where $\ker f$ and $(\ker f)^{\vee}$ are Cartier dual of each other. Consider the case, when $S\rightarrow \overline{S}$ is not \'etale.	Applying the above with $A=P(S,\overline{S})/J_2(X)$ and $B=P(S,\overline{S})/J_2(\overline{S})$, we obtain an exact sequence
	$$1\rightarrow \big( J_2(\overline{S})/J_2(X)\big)^{\vee}\rightarrow \big(P(S ,\overline{S})/J_2(\overline{S})\big)^{\vee}\rightarrow \big(P(S ,\overline{S})/J_2(X)\big)^{\vee}\rightarrow 1\ .$$
		It is well-known   that $\big(P(S ,\overline{S})/J_2(\overline{S})\big)^{\vee}\simeq P(S ,\overline{S})$. Hence, the result follows from  Lemma \ref{lem:connected}.
		
		If $S\rightarrow \overline{S}$ is \'etale, then we put $B=H_1/H_0$ and we get an exact sequence 	$$1\rightarrow \big( (H_1/H_0)/J_2(X)\big)^{\vee}\rightarrow \big(P(S ,\overline{S})/(H_1/H_0)\big)^{\vee}\rightarrow \big(P(S ,\overline{S})/J_2(X)\big)^{\vee}\rightarrow 1\ .$$
     In this case,  $P(S,\overline{S})$ is principally polarized, and hence 
		$$\big(P(S ,\overline{S})/(H_1/H_0)\big)\simeq P(S ,\overline{S})\simeq P(S,\overline{S})^{\vee}\ .$$ Now using  Lemma \ref{lem:connected} completes the proof.
	\end{proof}

\begin{proof}[Proof of Corollary \ref{cor:connected}] Immediate from Theorem \ref{thm:main} and Lemma \ref{lem:connected}.
\end{proof}

\begin{proof}[Proof of Theorem  \ref{thm:duality}] Immediate from Theorem \ref{thm:main}, Lemma \ref{lem:duality}, and Corollaries \ref{cor:spectralpsp} and \ref{cor:spectralpso}.
\end{proof}

\bibliographystyle{amsplain}
\bibliography{../papers}

\end{document}